\newtheorem{theorem}{Theorem}[section]
\newtheorem{definition}[theorem]{Definition}
\newtheorem{lemma}[theorem]{Lemma}
\newtheorem{corollary}[theorem]{Corollary}
\newtheorem{remark}[theorem]{Remark}
\newcommand{\dif}{\mathrm{d}}
\title{Augmented Gaussian Random Field: Theory and Computation}
\author[1]{Sheng Zhang}
\author[2]{Xiu Yang\thanks{xiy518@lehigh.edu}}
\author[1]{Samy Tindel}
\author[1,3]{Guang Lin\thanks{guanglin@purdue.edu}}
\date{\vspace{-5ex}}
\affil[1]{Department of Mathematics, Purdue University, West Lafayette, IN 47907, USA}
\affil[2]{Department of Industrial and Systems Engineering, Lehigh University, Bethlehem, PA 18015, USA}
\affil[3]{School of Mechanical Engineering, Purdue University, West Lafayette, IN 47907, USA}
\begin{document}
\maketitle

\noindent{}\textbf{Abstract:} We propose the novel augmented Gaussian random field (AGRF), which is a universal framework incorporating the data of observable and derivatives of any order. Rigorous theory is established. We prove that under certain conditions, the observable and its derivatives of any order are governed by a single Gaussian random field, which is the aforementioned AGRF. As a corollary, the statement ``the derivative of a Gaussian process remains a Gaussian process'' is validated, since the derivative is represented by a part of the AGRF. Moreover, a computational method corresponding to the universal AGRF framework is constructed. Both noiseless and noisy scenarios are considered. Formulas of the posterior distributions are deduced in a nice closed form. A significant advantage of our computational method is that the universal AGRF framework provides a natural way to incorporate arbitrary order derivatives and deal with missing data. We use four numerical examples to demonstrate the effectiveness of the computational method. The numerical examples are composite function, damped harmonic oscillator, Korteweg-De Vries equation, and Burgers' equation.

\noindent{}

\noindent{}\textbf{Keywords:} Gaussian random field, Gaussian process regression, arbitrary order derivatives, missing data, noisy data

\section{Introduction}
Gaussian random field (GRF) has been widely used in scientific and engineering study to construct a surrogate model (also called response surface or metamodel in different areas) of a complex system's observable based on available observations. Especially, its special case Gaussian process (GP) has become a powerful tool in applied math, statistics, machine learning, etc.~\cite{rasmussen_gaussian_2006}. Although random processes originally refer to one-dimensional random fields~\cite{abrahamsen_review_1997}, e.g., models describing time dependent systems, the terminology GP is interchangeable with GRF now in most application scenarios that involve high-dimensional systems. Also, in different areas, GRF-based (or GP-based) methods have different names. For example, in geostatistics, GP regression is referred to as Kriging, and it has multiple variants~\cite{sacks_design_1989, kitanidis_introduction_1997}.

To enhance the prediction accuracy of the GRF-based surrogate model, one can incorporate all the additional information available, such as gradients, Hessian, multi-fidelity data, physical laws, and empirical knowledge. For example, gradient-enhanced Kriging (GEK) uses gradients in either direct~\cite{morris_bayesian_1993} or indirect~\cite{chung_using_2002} way to improve the accuracy; multi-fidelity Cokriging combines a small amount of
high-fidelity data with a large amount of low-fidelity data from simplified or reduced models, in order to leverage low-fidelity models for speedup, while using high-fidelity data to establish accuracy and convergence guarantees~\cite{kennedy_predicting_2000, forrester_multi-fidelity_2007, le_gratiet_recursive_2014, perdikaris_nonlinear_2017}; physics-informed Kriging takes advantage of well-developed simulation tools to incorporate physical laws in the resulting GRF~\cite{yang_physics-informed_2018, yang_physics-informed_2019, yang_when_2020}; GP-based numerical ordinary/partial differential equation solver intrinsically imposes the equations in the structure of the GP, and it is one of the most important tools in probabilistic scientific computing~\cite{schober_probabilistic_2014, hennig_probabilistic_2015, chkrebtii_bayesian_2016, raissi_machine_2017, raissi_numerical_2018, cockayne_bayesian_2019}; inequality constraints that are not explicitly reflected in governing equations, e.g., positivity, monotonicity, can also be imposed to enhance accuracy and reduce uncertainty in the prediction~\cite{da_veiga_gaussian_2012, lin_bayesian_2014, lopez-lopera_finite-dimensional_2018, pensoneault_nonnegativity-enforced_2020}.

Despite the success in applying the aforementioned GRF-based methods to the construction of surrogate models for practical problems, the theory related to the accuracy, convergence, uncertainty, etc. of these methods is not well developed in general. Especially, when using the observable and its derivatives (such as gradient) jointly, e.g., to solve differential equations, one usually assumes that the random field representing the derivative is also a GRF, and its mean and covariance functions can be calculated by taking derivatives of the mean and covariance functions of the GRF representing the observable. Also, the correlation between the observable and its derivative is calculated accordingly \cite{rasmussen_gaussian_2006, adler_geometry_2010}. Intuitively, this is correct, because a linear combination of multiple Gaussian random variables is still a Gaussian random variable. However, taking limit of a linear combination is a critical step in the definition of derivative, which makes the theoretical guarantee of using derivatives in GRF-based methods less obvious. To the best of our knowledge, there is no comprehensive theory on the validity of the aforementioned assumption on the random field representing the derivative nor on its relation with the observable. Most literature uses the linearity of the derivative operator to validate this assumption (e.g.,~\cite{solak_derivative_2003}), which is questionable.

In this work, we propose the novel augmented Gaussian random field (AGRF), which is a universal framework incorporating the observable and its derivatives of any order. Rigorous theory is established. Under certain conditions, the observable and its derivatives of any order are governed by a single GRF, which is the aforementioned AGRF. As a corollary, the assumption ``the derivative of a Gaussian process remains a Gaussian process'' in~\cite{solak_derivative_2003} is validated, since the derivative is represented by a part of the AGRF. Furthermore, we construct a computational method corresponding to the universal AGRF framework. Both noiseless and noisy scenarios are considered. Formulas of the posterior distributions are deduced in a nice closed form. A significant advantage of our computational method is that the universal AGRF framework provides a natural way to incorporate arbitrary order derivatives and deal with missing data, e.g., the observation of the observable or its derivative is missing at some sampling locations.

This paper is organized as follows. The theoretical framework is presented in Section~\ref{sec:theo}, the computational framework is given in Section~\ref{sec:comp}, the numerical examples are provided in Section~\ref{sec:results}, and the conclusion follows in Section~\ref{sec:concl}.

\section{Theoretical framework}
\label{sec:theo}
In this section, we briefly review some fundamental definitions and an important theorem for GRF. Then we present our main theoretical framework, i.e., the AGRF, that unifies the GRFs representing the observable and its derivative. Finally, we extend it to the general case to incorporate information of derivatives of arbitrary order. For notation brevity, we consider the system's observable as a univariate function of the physical location or time. The extension to the multivariate case is straightforward. Therefore, our theoretical framework is applicable to GP methods that use the information of gradient as well as arbitrary order derivatives of the observable.

\subsection{Basic concepts}
In this paper, the Euclidean space $\mathbb{R}$ refers to $\mathbb{R}$ equipped with Euclidean metric and Lebesgue measure on Lebesgue-measurable sets. We begin with the definition of random fields.

\begin{definition}
Let $(\Omega, \mathcal{F}, \mathrm{P})$ be a probability space and $\mathbb{X}$ be a set. Suppose $f: \Omega \times \mathbb{X} \to \mathbb{R}$ is a mapping such that for each $x\in \mathbb{X}$, $f(\cdot, x): \Omega \to \mathbb{R}$ is a random variable (or measurable function). Then $f$ is called a real-valued random field on $\mathbb{X}$.
\end{definition}

We note that, in practical problems, $\mathbb{X}$ is typically a subset of the $d$-dimensional Euclidean space $\mathbb{R}^d$, i.e., $\mathbb{X}\subseteq \mathbb{R}^d$. Here, $\mathbb{X}$ is considered as a general set as
in~\cite{dudley_real_2002}. Next, we define Gaussian random fields as follows:

\begin{definition}
  Suppose $f$ is a real-valued random field on $\mathbb{X}$ such that for every finite set of indices $x_1, \cdots, x_p \in \mathbb{X}$, $(f(\omega, x_1), \cdots, f(\omega, x_p))$ is a multivariate Gaussian random variable, or, equivalently, every linear combination of $f(\omega, x_1), \cdots, f(\omega, x_p)$ has a univariate Gaussian (or normal) distribution. Then $f$ is called a Gaussian random field on $\mathbb{X}$.
\end{definition}

Here $\omega$ is an element in the sample space $\Omega$. A Gaussian random field is characterized by its mean and covariance functions:

\begin{definition}
  Given a Gaussian random field $f$ on $\mathbb{X}$, its mean function is defined as the expectation:
  \begin{equation}
    m(x) = \mathrm{E}[f(\omega, x)],
  \end{equation}
  and its covariance function is defined as:
  \begin{equation}
    k(x,x') = \mathrm{E}[(f(\omega, x) - m(x))(f(\omega, x') - m(x'))].
  \end{equation}
\end{definition}

Here, the covariance function is also called the kernel function. On the other hand, the following theorem provides a perspective in the converse way:

\begin{theorem}[Kolmogorov consistency theorem]\label{theorem1}
  Let $\mathbb{X}$ be any set, $m:\mathbb{X} \to \mathbb{R}$ any function, and $k: \mathbb{X} \times \mathbb{X} \to \mathbb{R}$ such that $k$ is symmetric and nonnegative definite. Then there exists a Gaussian random field on $\mathbb{X}$ with mean function $m$ and covariance function $k$.
\end{theorem}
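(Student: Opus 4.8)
The plan is to build the field through its finite-dimensional marginals and then invoke the Kolmogorov extension theorem to stitch them into a single measure. First, for every finite tuple $(x_1, \ldots, x_p)$ of points in $\mathbb{X}$, I would define a candidate law $\mu_{x_1, \ldots, x_p}$ on $\mathbb{R}^p$ as the multivariate Gaussian with mean vector $\mu = (m(x_1), \ldots, m(x_p))^\top$ and covariance matrix $\Sigma = (k(x_i, x_j))_{1 \le i,j \le p}$. Because $k$ is symmetric and nonnegative definite, $\Sigma$ is a genuine symmetric positive semidefinite matrix, hence a legitimate Gaussian covariance; when $\Sigma$ is singular the law is degenerate, so to avoid case distinctions I would specify $\mu_{x_1,\ldots,x_p}$ through its characteristic function
\begin{equation}
  \hat{\mu}_{x_1, \ldots, x_p}(t) = \exp\left(\mathrm{i}\, t^\top \mu - \tfrac{1}{2}\, t^\top \Sigma\, t\right), \qquad t \in \mathbb{R}^p,
\end{equation}
which is a valid characteristic function precisely because $\Sigma \succeq 0$.

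Next I would verify the two Kolmogorov consistency conditions for this family. The permutation condition asks that reordering the indices by a permutation transforms the law accordingly; this is immediate, since permuting the points permutes the entries of $\mu$ and symmetrically permutes the rows and columns of $\Sigma$, and the Gaussian form is invariant under such relabeling. The marginalization condition asks that the projection of $\mu_{x_1, \ldots, x_p, x_{p+1}}$ onto the first $p$ coordinates equals $\mu_{x_1, \ldots, x_p}$; this follows from the fact that marginals of a multivariate Gaussian are again Gaussian, with mean and covariance obtained by deleting the corresponding entry of $\mu$ and the corresponding row and column of $\Sigma$ — exactly the data defining $\mu_{x_1, \ldots, x_p}$. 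Both checks reduce to the characteristic function identity obtained by setting the relevant coordinate of $t$ to zero.

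With consistency in hand, I would apply the Kolmogorov extension theorem, which is valid here because the state space $\mathbb{R}$ is a Polish space while the index set $\mathbb{X}$ may be arbitrary. Taking the canonical model $\Omega = \mathbb{R}^{\mathbb{X}}$ with the product $\sigma$-algebra $\mathcal{F}$ and coordinate maps $f(\omega, x) = \omega(x)$, the theorem produces a probability measure $\mathrm{P}$ on $(\Omega, \mathcal{F})$ whose finite-dimensional marginals are exactly the prescribed $\mu_{x_1, \ldots, x_p}$. By construction, every finite collection $(f(\omega, x_1), \ldots, f(\omega, x_p))$ is then multivariate Gaussian, so $f$ is a Gaussian random field; reading off the first two moments of these marginals gives $\mathrm{E}[f(\omega, x)] = m(x)$ and $\mathrm{E}[(f(\omega,x)-m(x))(f(\omega,x')-m(x'))] = k(x,x')$, as required.

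The main obstacle is not any single computation but ensuring the pieces of the extension machinery apply cleanly. I expect the delicate points to be admitting degenerate covariance matrices — handled uniformly by working with characteristic functions rather than densities — and confirming that the hypotheses of the Kolmogorov extension theorem are met for an arbitrary, possibly uncountable, index set $\mathbb{X}$, which is exactly why the standard-Borel (Polish) nature of the real line $\mathbb{R}$ is the essential structural input.
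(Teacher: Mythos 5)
Your proposal is correct and matches the paper's approach: the paper does not write out a proof but simply cites Koralov--Sinai (p.~176) and Dudley (p.~443), and the argument given there is precisely the one you describe --- Gaussian finite-dimensional laws specified via characteristic functions (which uniformly handles degenerate covariance matrices), verification of the permutation and marginalization consistency conditions, and the Kolmogorov extension theorem applied on the canonical product space $\mathbb{R}^{\mathbb{X}}$ with coordinate maps. The two delicate points you flag, degeneracy of $\Sigma$ and applicability of the extension theorem for an arbitrary (possibly uncountable) index set via the Polish structure of $\mathbb{R}$, are exactly the right ones, and you resolve both correctly.
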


\begin{proof}
  See~\cite{koralov_theory_2007} (p. 176) or~\cite{dudley_real_2002} (p. 443).
\end{proof}

\subsection{Main results}
We start our theoretical results with a limit in a Gaussian random field related to the derivative of its realization. The following theorem is a classic result with a proof in~\cite{cramer_stationary_2004} (p. 84). Here, we adapt the statement into our framework and reorganize the proof.

\begin{theorem}\label{theorem2}
  Let $f$ be a Gaussian random field on $\mathbb{R}$ with mean function $m(x)$ and covariance function $k(x,x')$ such that $m(x)$ is differentiable and $k(x,x')$ is twice continuously differentiable. Then there exists a real-valued random field $Df$ on $\mathbb{R}$ such that for each fixed $x\in \mathbb{R}$,
  \begin{equation}\label{eq:Df}
    \frac{f(\omega, x+\delta)-f(\omega, x)}{\delta} \xrightarrow[\delta\to 0]{\text{m.s.}} Df(\omega, x).
  \end{equation}
\end{theorem}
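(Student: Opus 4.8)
The plan is to exploit the completeness of the space $L^2(\Omega,\mathcal{F},\mathrm{P})$ of square-integrable random variables. Mean-square convergence is precisely convergence in the $L^2$ norm, and since $L^2$ is a Hilbert space it suffices to show that the difference-quotient family
\[
  Y_\delta(\omega) = \frac{f(\omega, x+\delta) - f(\omega, x)}{\delta}
\]
is Cauchy in $L^2$ as $\delta \to 0$. The relevant criterion is that this holds if and only if the cross-correlation $\mathrm{E}[Y_{\delta_1} Y_{\delta_2}]$ tends to a finite limit as $\delta_1, \delta_2 \to 0$ independently, since
\[
  \mathrm{E}\bigl[(Y_{\delta_1} - Y_{\delta_2})^2\bigr] = \mathrm{E}[Y_{\delta_1}^2] - 2\,\mathrm{E}[Y_{\delta_1}Y_{\delta_2}] + \mathrm{E}[Y_{\delta_2}^2],
\]
so that if the mixed limit $L$ exists then each of the three terms on the right converges to $L$ and the left-hand side is forced to zero.

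First I would reduce the computation to the mean and covariance of $f$. Writing $f = m + g$ with $g$ the centered field (so $\mathrm{E}[g(\omega,s)g(\omega,t)] = k(s,t)$), I would expand
\[
  \mathrm{E}[Y_{\delta_1} Y_{\delta_2}] = \frac{m(x+\delta_1)-m(x)}{\delta_1}\cdot\frac{m(x+\delta_2)-m(x)}{\delta_2} + \frac{\Delta k}{\delta_1 \delta_2},
\]
where $\Delta k = k(x+\delta_1, x+\delta_2) - k(x+\delta_1, x) - k(x, x+\delta_2) + k(x,x)$ is the second mixed difference of $k$ based at $(x,x)$; the cross terms vanish because $g$ is centered.

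The next step handles the two pieces separately. The mean part tends to $(m'(x))^2$ by differentiability of $m$. For the covariance part, I would apply the mean value theorem twice --- once in each argument --- to rewrite $\Delta k/(\delta_1 \delta_2)$ as a single evaluation of the mixed second partial derivative of $k$ at an intermediate point $(\xi_1, \xi_2)$, where $\xi_1$ lies between $x$ and $x+\delta_1$ and $\xi_2$ lies between $x$ and $x+\delta_2$. As $\delta_1, \delta_2 \to 0$ both $\xi_1, \xi_2 \to x$, and the assumed continuity of the mixed second partial of $k$ is exactly what permits passing to the limit, yielding the value of that partial at $(x,x)$. Hence $\mathrm{E}[Y_{\delta_1}Y_{\delta_2}]$ has a finite limit, the Cauchy criterion is met, and for each fixed $x$ the $L^2$ limit exists; we name it $Df(\omega, x)$, and collecting these limits over all $x \in \mathbb{R}$ defines the random field $Df$.

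The main obstacle is the covariance term: the mere existence of the mixed partial would not suffice, because $\delta_1$ and $\delta_2$ approach zero along arbitrary independent paths. The hypothesis that $k$ is twice continuously differentiable is precisely what guarantees that this genuinely two-parameter limit is well defined and equals the mixed partial at $(x,x)$; carefully justifying this interchange of limit and evaluation is the crux of the argument. Once the Cauchy-in-$L^2$ reduction is in place, everything else is routine bookkeeping.
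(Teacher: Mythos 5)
Your proposal is correct and is essentially the paper's own argument: both establish that the difference quotients $\xi_\delta$ form a Cauchy family in $L^2$ by showing the cross-moment $\mathrm{E}[\xi_\delta \xi_\tau]$ converges to $\partial^2 k(x,x')/\partial x\,\partial x'$ at $x'=x$ plus $(m'(x))^2$ as $\delta,\tau\to 0$ jointly, then expanding $\mathrm{E}[|\xi_\delta-\xi_\tau|^2]$ into the three second moments, and finally taking the $L^2$ limit. The only differences are matters of bookkeeping rather than route: you invoke completeness of $L^2$ (Riesz--Fischer) where the paper reproves it inline via a rapidly convergent subsequence, the monotone convergence theorem, and Cauchy--Schwarz, and your double mean-value-theorem treatment of the mixed second difference of $k$ supplies a justification for a limit the paper asserts directly from the smoothness hypotheses.
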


\begin{remark}
  Here ``m.s.'' stands for mean-square convergence, i.e., convergence in $L^2$. Since mean-square convergence implies convergence in probability and convergence in distribution, the limit in Theorem~\ref{theorem2} also holds in probability and in distribution.
\end{remark}

\begin{proof}[Proof of Theorem~\ref{theorem2}]
  We use the notations
  \begin{equation}
    m'(x) := \frac{\dif}{\dif x} m(x)
  \end{equation}
  and
  \begin{equation}
    k_{12}(x,x') := \frac{\partial^2}{\partial x \partial x'}k(x,x').
  \end{equation}
  Let $x\in \mathbb{R}$ be fixed. For any $\delta \in \mathbb{R}-\{0\}$, define
  \begin{equation}
    \xi_\delta = \xi_\delta(\omega) = \frac{f(\omega, x+\delta)-f(\omega, x)}{\delta}.
  \end{equation}
  Suppose $\delta, \tau \in \mathbb{R}-\{0\}$. Then
  \begin{equation}
    \begin{aligned}
      \mathrm{E}[\xi_\delta \xi_\tau] =&\; \mathrm{E}\left[\frac{(f(\omega, x+\delta)-f(\omega, x))(f(\omega, x+\tau)-f(\omega, x))}{\delta \tau}\right]  \\
      =&\; \frac{1}{\delta \tau}\{\mathrm{E}[f(\omega, x+\delta)f(\omega, x+\tau)] - \mathrm{E}[f(\omega, x+\delta)f(\omega, x)]  \\
      &\; - \mathrm{E}[f(\omega, x)f(\omega, x+\tau)] + \mathrm{E}[f(\omega, x)f(\omega, x)]\}  \\
      =&\; \frac{1}{\delta \tau}\{k(x+\delta, x+\tau) + m(x+\delta)m(x+\tau) -  k(x+\delta, x) - m(x+\delta)m(x)  \\
      &\; - k(x, x+\tau) - m(x)m(x+\tau) + k(x, x) + m(x)m(x)\}  \\
      =&\; \frac{1}{\delta \tau}\{k(x+\delta, x+\tau) -  k(x+\delta, x) - k(x, x+\tau) + k(x, x)\}   \\
      &\; + \frac{1}{\delta \tau}\{m(x+\delta)m(x+\tau) - m(x+\delta)m(x) - m(x)m(x+\tau) + m(x)m(x)\}.
    \end{aligned}
  \end{equation}
  Since $m(x)$ is differentiable and $k(x,x')$ is twice continuously differentiable,
  \begin{equation}
    \lim_{\delta, \tau \to 0}\mathrm{E}[\xi_\delta \xi_\tau] = k_{12}(x,x) + m'(x)m'(x).
  \end{equation}
  Therefore,
  \begin{equation}
    \lim_{\delta, \tau \to 0}\mathrm{E}[|\xi_\delta- \xi_\tau|^2] = \lim_{\delta, \tau \to 0}\{\mathrm{E}[\xi_\delta \xi_\delta] + \mathrm{E}[\xi_\tau\xi_\tau] - 2 \mathrm{E}[\xi_\delta \xi_\tau]\} = 0.
  \end{equation}
  Choose a sequence $\tau_i \to 0$ ($i = 1, 2, \cdots$) such that
  \begin{equation}
    \mathrm{E}[|\xi_{\tau_{i+1}}- \xi_{\tau_i}|^2] \le \frac{1}{2^{2i}}.
  \end{equation}
  Then
  \begin{equation}
    \mathrm{E}|\xi_{\tau_{i+1}}- \xi_{\tau_i}| \le \sqrt{\mathrm{E}[|\xi_{\tau_{i+1}}- \xi_{\tau_i}|^2]} \le \frac{1}{2^i}.
  \end{equation}
  By monotone convergence theorem,
  \begin{equation}
    \mathrm{E}\left[\sum_{i=1}^{\infty}|\xi_{\tau_{i+1}} - \xi_{\tau_i}|\right] = \sum_{i=1}^{\infty}\mathrm{E}|\xi_{\tau_{i+1}} - \xi_{\tau_i}| \le 1 < \infty.
  \end{equation}
  Thus,
  \begin{equation}
    \mathrm{P}\left(\sum_{i=1}^{\infty}|\xi_{\tau_{i+1}} - \xi_{\tau_i}|< \infty\right) = 1.
  \end{equation}
  So the random variable $\eta = \xi_{\tau_1} + \sum_{i=1}^{\infty}(\xi_{\tau_{i+1}} - \xi_{\tau_i})$ is well defined, and it is a candidate to be the limit in Eq.~\eqref{eq:Df}. Now, by monotone convergence theorem and Cauchy-Schwarz inequality, for any $j\ge 1$,
  \begin{equation}
    \begin{aligned}
      \mathrm{E}[|\eta - \xi_{\tau_j}|^2] =&\; \mathrm{E}\left[\left|\sum_{i=j}^{\infty}(\xi_{\tau_{i+1}} - \xi_{\tau_i})\right|^2\right]
      \le \mathrm{E}\left[\left(\sum_{i=j}^{\infty}|\xi_{\tau_{i+1}} - \xi_{\tau_i}|\right)^2\right]   \\
      =&\; \mathrm{E}\left[\left(\sum_{i=j}^{\infty}|\xi_{\tau_{i+1}} - \xi_{\tau_i}|\right)\left(\sum_{i'=j}^{\infty}|\xi_{\tau_{i'+1}} - \xi_{\tau_{i'}}|\right)\right]   \\
      =&\; \mathrm{E}\left[\sum_{i=j}^{\infty}\sum_{i'=j}^{\infty}|\xi_{\tau_{i+1}} - \xi_{\tau_i}||\xi_{\tau_{i'+1}} - \xi_{\tau_{i'}}|\right]  \\
      =&\; \sum_{i=j}^{\infty}\sum_{i'=j}^{\infty}\mathrm{E}|\xi_{\tau_{i+1}} - \xi_{\tau_i}||\xi_{\tau_{i'+1}} - \xi_{\tau_{i'}}|  \\
      \le&\; \sum_{i=j}^{\infty}\sum_{i'=j}^{\infty}\sqrt{\mathrm{E}[|\xi_{\tau_{i+1}} - \xi_{\tau_i}|^2] \mathrm{E}[|\xi_{\tau_{i'+1}} - \xi_{\tau_{i'}}|^2]}   \\
      \le&\; \sum_{i=j}^{\infty}\sum_{i'=j}^{\infty}\sqrt{\frac{1}{2^{2i}}\frac{1}{2^{2i'}}} = \sum_{i=j}^{\infty}\frac{1}{2^i}\sum_{i'=j}^{\infty}\frac{1}{2^{i'}} = \left(\frac{1}{2^{j-1}}\right)^2.
    \end{aligned}
  \end{equation}
  Since
  \begin{equation}
    \mathrm{E}[|\eta - \xi_\delta|^2] \le \mathrm{E}[2|\eta - \xi_{\tau_j}|^2 + 2|\xi_{\tau_j} - \xi_\delta|^2] = 2\mathrm{E}[|\eta - \xi_{\tau_j}|^2] + 2\mathrm{E}[|\xi_{\tau_j} - \xi_\delta|^2],
  \end{equation}
  we have
  \begin{equation}
    \lim_{\delta \to 0} \mathrm{E}[|\eta - \xi_\delta|^2] = 0,
  \end{equation}
  or
  \begin{equation}
    \xi_\delta \xrightarrow[\delta\to 0]{\text{m.s.}} \eta.
  \end{equation}
  Let $Df(\omega, x) = \eta$, do the same for every $x$, and the proof is complete.
\end{proof}

\begin{remark}
  For $\delta \in \mathbb{R}-\{0\}$, let
  \begin{equation}
    f_\delta = f_\delta(\omega, x) = \frac{f(\omega, x+\delta)-f(\omega, x)}{\delta}
  \end{equation}
  be a random field on $\mathbb{R}$. One could also consider the convergence of the family $\{f_\delta\;|\;\delta\in \mathbb{R}-\{0\}\}$ to $Df$. We have refrained from getting into this type of consideration for the sake of conciseness.
\end{remark}

The next lemma indicates that the limit of a series of Gaussian random variables is still a Gaussian random variable.

\begin{lemma}\label{lemma}
  Let $(\Omega, \mathcal{F}, \mathrm{P})$ be a probability space and $\xi_\delta$ ($\delta \in \mathbb{R}$) be a family of random variables such that $\xi_\delta$ ($\delta \neq 0$) are Gaussian random variables with mean $\mu_\delta$ and variance $\sigma_\delta^2$, and
  \begin{equation}
    \xi_\delta \xrightarrow[\delta\to 0]{\text{m.s.}} \xi_0.
  \end{equation}
  Then $\xi_0$ has Gaussian distribution with mean $\mu_0$ and variance $\sigma_0^2$, and
  \begin{equation}
  \begin{aligned}
    \lim_{\delta \to 0} \mu_\delta &= \mu_0  , \\
    \lim_{\delta \to 0} \sigma_\delta^2 &= \sigma_0^2.
  \end{aligned}
  \end{equation}
\end{lemma}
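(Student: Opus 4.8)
The plan is to extract the parameters $\mu_0$ and $\sigma_0^2$ directly from the $L^2$-limit $\xi_0$ and then identify the limiting law via characteristic functions. First I would record that mean-square convergence is precisely convergence in $L^2(\Omega, \mathcal{F}, \mathrm{P})$. Since the inclusion $L^2 \hookrightarrow L^1$ is continuous on a probability space, $\xi_\delta \to \xi_0$ in $L^1$ as well, whence $\mu_\delta = \mathrm{E}[\xi_\delta] \to \mathrm{E}[\xi_0]$; I would define $\mu_0 := \mathrm{E}[\xi_0]$. Likewise, continuity of the $L^2$-norm gives $\mathrm{E}[\xi_\delta^2] = \|\xi_\delta\|_2^2 \to \|\xi_0\|_2^2 = \mathrm{E}[\xi_0^2]$, so that $\sigma_\delta^2 = \mathrm{E}[\xi_\delta^2] - \mu_\delta^2 \to \mathrm{E}[\xi_0^2] - \mu_0^2 =: \sigma_0^2$. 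This already establishes the two limit relations asserted in the lemma.

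Next I would identify the law of $\xi_0$ through its characteristic function. Because each $\xi_\delta$ ($\delta \neq 0$) is Gaussian, $\phi_{\xi_\delta}(t) = \mathrm{E}[e^{it\xi_\delta}] = \exp(it\mu_\delta - \tfrac{1}{2} t^2 \sigma_\delta^2)$. Letting $\delta \to 0$ and using the convergence of $\mu_\delta$ and $\sigma_\delta^2$ from the previous step, for every fixed $t \in \mathbb{R}$ we obtain $\phi_{\xi_\delta}(t) \to \exp(it\mu_0 - \tfrac{1}{2} t^2 \sigma_0^2)$. On the other hand, since mean-square convergence implies convergence in distribution (as noted in the remark following Theorem~\ref{theorem2}), we have $\phi_{\xi_\delta}(t) \to \phi_{\xi_0}(t)$ pointwise. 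Comparing the two limits yields $\phi_{\xi_0}(t) = \exp(it\mu_0 - \tfrac{1}{2} t^2 \sigma_0^2)$, and by the uniqueness theorem for characteristic functions $\xi_0$ has the Gaussian distribution with mean $\mu_0$ and variance $\sigma_0^2$.

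The only genuinely delicate point is the degenerate case $\sigma_0^2 = 0$, which I would address explicitly: here the limiting characteristic function reduces to $e^{it\mu_0}$, meaning $\xi_0$ equals the constant $\mu_0$ almost surely, the degenerate Gaussian (a point mass), and the statement of the lemma is to be read to include it. Apart from this, the argument is essentially bookkeeping with $L^2$-convergence and the explicit form of the Gaussian characteristic function; no tightness or compactness argument is needed, because convergence in distribution is handed to us for free by the mean-square hypothesis. One could alternatively invoke a L\'evy-type continuity theorem after observing that the pointwise limit of the $\phi_{\xi_\delta}$ is manifestly continuous at $t = 0$, but this detour is unnecessary precisely because convergence in distribution is already known.
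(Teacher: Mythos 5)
Your proof is correct, but it takes a genuinely different route from the paper's. You extract $\mu_0 = \mathrm{E}[\xi_0]$ and $\sigma_0^2 = \mathrm{Var}[\xi_0]$ directly from the $L^2$-limit (via the inclusion $L^2 \hookrightarrow L^1$ on a probability space and continuity of the $L^2$-norm), and then identify the law of $\xi_0$ by passing the explicit Gaussian characteristic functions to the limit, using the easy implication chain (m.s. convergence $\Rightarrow$ convergence in distribution $\Rightarrow$ pointwise convergence of characteristic functions) together with the uniqueness theorem for characteristic functions. The paper never touches characteristic functions: it first proves that $\mu_\delta$ and $\sigma_\delta^2$ are Cauchy as $\delta \to 0$ (using Cauchy--Schwarz and a uniform bound on $\mathrm{E}[\xi_\delta^2]$ near $\delta = 0$), defines $\mu_0$ and $\sigma_0^2$ as those limits, and then splits into cases: if $\sigma_0^2 = 0$ it shows $\mathrm{E}|\xi_0 - \mu_0| = 0$, so $\xi_0 = \mu_0$ almost surely; if $\sigma_0^2 > 0$ it standardizes $\eta_\delta = (\xi_\delta - \mu_\delta)/\sigma_\delta$, shows $\eta_\delta \to \eta_0$ in mean square, and squeezes the CDF of $\eta_0$ between $\Phi(z-\epsilon)$ and $\Phi(z+\epsilon)$ via Chebyshev-type estimates. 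Your argument is shorter, treats the degenerate and nondegenerate cases uniformly (the characteristic function $e^{it\mu_0}$ of a point mass needs no separate analysis beyond the remark you make), and identifies the limiting parameters as the actual moments of $\xi_0$ at the outset; the price is reliance on the uniqueness theorem, whereas the paper's argument is entirely elementary and self-contained, using only Cauchy--Schwarz, a Chebyshev bound, and monotonicity of CDFs. Your closing observation is also exactly right: no L\'evy continuity (tightness) argument is needed, because convergence in distribution is already supplied by the mean-square hypothesis, so only the trivial direction plus uniqueness is invoked.
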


\begin{proof}
  Suppose $\delta, \tau \in \mathbb{R}-\{0\}$. Since
  \begin{equation}
    \lim_{\delta \to 0} \mathrm{E}[|\xi_\delta - \xi_0|^2] = 0
  \end{equation}
  and
  \begin{equation}
    \mathrm{E}[|\xi_\delta - \xi_\tau|^2] \le 2\mathrm{E}[|\xi_\delta - \xi_0|^2] + 2\mathrm{E}[|\xi_\tau - \xi_0|^2],
  \end{equation}
  we have
  \begin{equation}\label{eq:lemma_cauchy}
    \lim_{\delta, \tau \to 0} \mathrm{E}[|\xi_\delta - \xi_\tau|^2] = 0.
  \end{equation}
  Thus,
  \begin{equation}
    |\mu_\delta - \mu_\tau| = |\mathrm{E}[\xi_\sigma-\xi_\tau]|
    \le \mathrm{E}|\xi_\sigma-\xi_\tau| \\
    \le \sqrt{\mathrm{E}[|\xi_\sigma-\xi_\tau|^2]},
  \end{equation}
  which indicates
  \begin{equation}
    \lim_{\delta, \tau \to 0} |\mu_\delta - \mu_\tau| = 0.
  \end{equation}
  Hence, there exists $\mu_0 \in \mathbb{R}$ such that
  \begin{equation}\label{eq:lemma_mu}
    \lim_{\delta \to 0} \mu_\delta = \mu_0.
  \end{equation}
  By Eq.~\eqref{eq:lemma_cauchy}, there exists $\Delta>0$ such that for any $\delta$ ($0<|\delta|\le \Delta$), we have
  \begin{equation}
    \mathrm{E}[|\xi_\delta - \xi_\Delta|^2] \le 1.
  \end{equation}
  So
  \begin{equation}
    \mathrm{E}[\xi_\delta^2] \le 2\mathrm{E}[|\xi_\delta - \xi_\Delta|^2] + 2\mathrm{E}[\xi_\Delta^2] \le 2 + 2\mathrm{E}[\xi_\Delta^2].
  \end{equation}
  Since $\xi_\Delta$ is Gaussian, $\mathrm{E}[\xi_\Delta^2]$ is finite. Now, for any $\delta$ ($0<|\delta|\le \Delta$) and $\tau$ ($0<|\tau|\le \Delta$), by Cauchy-Schwarz inequality,
  \begin{equation}\label{eq:lemma_cs}
    \begin{aligned}
      |\sigma_\delta^2 - \sigma_\tau^2| &= |(\mathrm{E}[\xi_\delta^2] - \mu_\delta^2) - (\mathrm{E}[\xi_\tau^2] - \mu_\tau^2)|  \\
      &\le  |\mathrm{E}[\xi_\delta^2] - \mathrm{E}[\xi_\tau^2]| + |\mu_\delta^2 - \mu_\tau^2| \\
      &\le \mathrm{E}|\xi_\delta^2 - \xi_\tau^2| + |\mu_\delta^2 - \mu_\tau^2| \\
      &= \mathrm{E}|\xi_\delta - \xi_\tau||\xi_\delta + \xi_\tau| + |\mu_\delta^2 - \mu_\tau^2| \\
      &\le \sqrt{\mathrm{E}[|\xi_\delta - \xi_\tau|^2]}\sqrt{\mathrm{E}[|\xi_\delta + \xi_\tau|^2]} + |\mu_\delta^2 - \mu_\tau^2|  \\
      &\le \sqrt{\mathrm{E}[|\xi_\delta - \xi_\tau|^2]}\sqrt{2\mathrm{E}[\xi_\delta^2] + 2\mathrm{E}[\xi_\tau^2]} + |\mu_\delta^2 - \mu_\tau^2|  \\
      &\le \sqrt{\mathrm{E}[|\xi_\delta - \xi_\tau|^2]}\sqrt{8 + 8\mathrm{E}[\xi_\Delta^2]} + |\mu_\delta^2 - \mu_\tau^2|.
    \end{aligned}
  \end{equation}
  By Eqs.~\eqref{eq:lemma_cauchy}, \eqref{eq:lemma_mu}, and \eqref{eq:lemma_cs}, we have
  \begin{equation}
    \lim_{\delta, \tau \to 0} |\sigma_\delta^2 - \sigma_\tau^2| = 0.
  \end{equation}
  Since $\sigma_\delta^2 \ge 0$ for all $\delta \in \mathbb{R}-\{0\}$, there exists $\sigma_0^2 \ge 0$ such that
  \begin{equation}
    \lim_{\delta \to 0} \sigma_\delta^2 = \sigma_0^2.
  \end{equation}
  If $\sigma_0^2 = 0$, then
  \begin{equation}
    \begin{aligned}
      \mathrm{E}|\xi_0 - \mu_0| &\le \mathrm{E}|\xi_0 - \xi_\delta| + \mathrm{E}|\xi_\delta - \mu_\delta| + \mathrm{E}|\mu_\delta - \mu_0|  \\
      &\le \sqrt{\mathrm{E}[|\xi_0 - \xi_\delta|^2]} + \sqrt{\mathrm{E}[|\xi_\delta - \mu_\delta|^2]} + |\mu_\delta - \mu_0|  \\
      &= \sqrt{\mathrm{E}[|\xi_0 - \xi_\delta|^2]} + \sqrt{\sigma_\delta^2} + |\mu_\delta - \mu_0|.
    \end{aligned}
  \end{equation}
  Let $\delta \to 0$ and we have
  \begin{equation}
    \mathrm{E}|\xi_0 - \mu_0| = 0,
  \end{equation}
  or
  \begin{equation}
    \mathrm{P}(\xi_0 = \mu_0) = 1.
  \end{equation}
  Therefore, $\xi_0$ has degenerate Gaussian distribution with mean $\mu_0$ and variance $0$. If $\sigma_0^2 > 0$, then there exists $\Delta' > 0$ such that for any $\delta$ ($0 < |\delta| \le \Delta'$), we have
  \begin{equation}
    \sigma_\delta^2 \ge \sigma_0^2 / 2 > 0.
  \end{equation}
  For all $\delta$ ($|\delta| \le \Delta'$), define
  \begin{equation}
    \eta_\delta = \frac{\xi_\delta - \mu_\delta}{\sigma_\delta}.
  \end{equation}
  Then when $0 < |\delta| \le \Delta'$, $\eta_\delta$ has the standard Gaussian distribution. Now,
  \begin{equation}
    \begin{aligned}
      \mathrm{E}[|\eta_\delta - \eta_0|^2] &= \mathrm{E}\left[\left|\frac{\xi_\delta - \mu_\delta}{\sigma_\delta} - \frac{\xi_0 - \mu_0}{\sigma_0}\right|^2\right]  \\
      &\le 2\mathrm{E}\left[\left|\frac{\xi_\delta}{\sigma_\delta} - \frac{\xi_0}{\sigma_0}\right|^2\right] + 2\mathrm{E}\left[\left|\frac{\mu_\delta}{\sigma_\delta} - \frac{\mu_0}{\sigma_0}\right|^2\right]  \\
      &\le 4\mathrm{E}\left[\left|\frac{\xi_\delta}{\sigma_\delta} - \frac{\xi_\delta}{\sigma_0}\right|^2\right] + 4\mathrm{E}\left[\left|\frac{\xi_\delta}{\sigma_0} - \frac{\xi_0}{\sigma_0}\right|^2\right] + 2\left|\frac{\mu_\delta}{\sigma_\delta} - \frac{\mu_0}{\sigma_0}\right|^2   \\
      &= 4\left(\frac{1}{\sigma_\delta} - \frac{1}{\sigma_0}\right)^2 \mathrm{E}[\xi_\delta^2] + \frac{4}{\sigma_0^2}\mathrm{E}[|\xi_\delta - \xi_0|^2] + 2\left|\frac{\mu_\delta}{\sigma_\delta} - \frac{\mu_0}{\sigma_0}\right|^2   \\
      &= 4\frac{(\sigma_0 - \sigma_\delta)^2}{\sigma_\delta^2 \sigma_0^2} (\sigma_\delta^2 + \mu_\delta^2) + \frac{4}{\sigma_0^2}\mathrm{E}[|\xi_\delta - \xi_0|^2] + 2\left|\frac{\mu_\delta}{\sigma_\delta} - \frac{\mu_0}{\sigma_0}\right|^2.
    \end{aligned}
  \end{equation}
  Thus,
  \begin{equation}
    \lim_{\delta \to 0} \mathrm{E}[|\eta_\delta - \eta_0|^2] = 0.
  \end{equation}
  Let
  \begin{equation}
    \Phi (z) = \frac{1}{\sqrt{2\pi}}\int_{-\infty}^{z}e^{-t^2/2}dt
  \end{equation}
  be the cumulative distribution function (CDF) of the standard Gaussian distribution. For any $z\in \mathbb{R}$ and any $\epsilon > 0$,
  \begin{equation}
    \mathrm{P}(\eta_0 \le z) \le \mathrm{P}(\eta_\delta \le z + \epsilon) + \mathrm{P}(|\eta_\delta - \eta_0| > \epsilon)
    \le \Phi (z + \epsilon) + \frac{\mathrm{E}[|\eta_\delta - \eta_0|^2]}{\epsilon^2}.
  \end{equation}
  Let $\delta \to 0$ and we have
  \begin{equation}
    \mathrm{P}(\eta_0 \le z) \le \Phi (z + \epsilon).
  \end{equation}
  Let $\epsilon \to 0$ and we have
  \begin{equation}\label{eq:lemma_le}
    \mathrm{P}(\eta_0 \le z) \le \Phi (z).
  \end{equation}
  On the other hand,
  \begin{equation}
    \begin{aligned}
      \mathrm{P}(\eta_0 > z) &\le \mathrm{P}(\eta_\delta > z - \epsilon) + \mathrm{P}(|\eta_\delta - \eta_0| > \epsilon)  \\
      &\le 1 - \Phi (z - \epsilon) + \frac{\mathrm{E}[|\eta_\delta - \eta_0|^2]}{\epsilon^2}.
    \end{aligned}
  \end{equation}
  Let $\delta \to 0$ and we have
  \begin{equation}
    \mathrm{P}(\eta_0 > z) \le 1 - \Phi (z - \epsilon).
  \end{equation}
  Let $\epsilon \to 0$ and we have
  \begin{equation}
    \mathrm{P}(\eta_0 > z) \le 1 - \Phi (z),
  \end{equation}
  or
  \begin{equation}\label{eq:lemma_ge}
    \mathrm{P}(\eta_0 \le z) \ge \Phi(z).
  \end{equation}
  Combining Eqs.~\eqref{eq:lemma_le} and \eqref{eq:lemma_ge}, we have
  \begin{equation}
    \mathrm{P}(\eta_0 \le z) = \Phi(z).
  \end{equation}
  Thus, $\eta_0$ has standard Gaussian distribution, which implies that $\xi_0$ has Gaussian distribution with mean $\mu_0$ and variance $\sigma_0^2$.
\end{proof}

Now we define an augmented random field consisting of a Gaussian random field $f$ and the $Df$ associated with it. Then we prove that this augmented random field is a Gaussian random field.

\begin{definition}
  Let $f$ be a Gaussian random field on $\mathbb{R}$ with mean function $m(x)$ and covariance function $k(x,x')$ such that $m(x)$ is differentiable and $k(x,x')$ is twice continuously differentiable. The real-valued random field $Df$ defined in Theorem~\ref{theorem2} is called the derivative random field of $f$. Define a real-valued random field on $\mathbb{R} \times \{0,1\}$:
  \begin{equation}
    \tilde{f}: \Omega \times \mathbb{R} \times \{0,1\} \rightarrow \mathbb{R}
  \end{equation}
  such that
  \begin{equation}
    \begin{aligned}
      \tilde{f}(\omega, x, 0) &= f(\omega, x)   \\
      \tilde{f}(\omega, x, 1) &= Df(\omega, x).
    \end{aligned}
  \end{equation}
  We call $\tilde{f}$ as the augmented random field of $f$.
\end{definition}

\begin{theorem}\label{theorem3}
  Let $f$ be a Gaussian random field on $\mathbb{R}$ with mean function $m(x)$ and covariance function $k(x,x')$ such that $m(x)$ is differentiable and $k(x,x')$ is twice continuously differentiable. Then the augmented random field $\tilde{f}$ of $f$ is a Gaussian random field on $\mathbb{R} \times \{0,1\}$.
\end{theorem}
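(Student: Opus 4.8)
The plan is to verify the definition of a Gaussian random field directly: I must show that for every finite collection of points in $\mathbb{R}\times\{0,1\}$, every linear combination of the associated values of $\tilde{f}$ has a univariate Gaussian distribution. Any such collection splits, according to its second coordinate, into points $(x_1,0),\dots,(x_p,0)$ and $(y_1,1),\dots,(y_q,1)$, so the most general linear combination takes the form
\begin{equation}
  S = \sum_{i=1}^{p} a_i\, f(\omega, x_i) + \sum_{j=1}^{q} b_j\, Df(\omega, y_j),
\end{equation}
with real coefficients $a_i, b_j$. The goal is therefore to prove that this single random variable $S$ is Gaussian.

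The central idea is to approximate $S$ by quantities that are manifestly Gaussian and then pass to the limit via Lemma~\ref{lemma}. For $\delta \in \mathbb{R}-\{0\}$ I would define
\begin{equation}
  S_\delta = \sum_{i=1}^{p} a_i\, f(\omega, x_i) + \sum_{j=1}^{q} b_j\, \frac{f(\omega, y_j+\delta)-f(\omega, y_j)}{\delta}.
\end{equation}
Each $S_\delta$ is a finite linear combination of values of the Gaussian random field $f$, hence is univariate Gaussian by definition. It then remains to show that $S_\delta \to S$ in mean square as $\delta \to 0$, for then Lemma~\ref{lemma}, applied to the one-parameter family $\{S_\delta\}_{\delta\in\mathbb{R}}$ with the identification $S_0 := S$, immediately gives that $S$ is Gaussian.

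For the mean-square convergence, I would observe that the terms $a_i\, f(\omega, x_i)$ do not depend on $\delta$ and therefore cancel in the difference, leaving
\begin{equation}
  S_\delta - S = \sum_{j=1}^{q} b_j\left( \frac{f(\omega, y_j+\delta)-f(\omega, y_j)}{\delta} - Df(\omega, y_j) \right).
\end{equation}
By Theorem~\ref{theorem2}, each bracketed term converges to $0$ in $L^2$ as $\delta \to 0$; the triangle inequality in $L^2$ then yields
\begin{equation}
  \sqrt{\mathrm{E}[|S_\delta - S|^2]} \le \sum_{j=1}^{q} |b_j|\, \sqrt{\mathrm{E}\left[\left| \frac{f(\omega, y_j+\delta)-f(\omega, y_j)}{\delta} - Df(\omega, y_j) \right|^2\right]} \longrightarrow 0.
\end{equation}
This closes the argument once the Lemma is invoked.

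The step I expect to carry the real conceptual weight is this passage to the limit, not any of the estimates: a priori, taking a mean-square limit of Gaussian variables could destroy Gaussianity, which is exactly the subtlety flagged in the introduction. What makes the proof work is the division of labor between the two prior results, namely Theorem~\ref{theorem2}, which guarantees that the limiting derivative random field exists (so that the difference quotients form a mean-square convergent family), and Lemma~\ref{lemma}, which guarantees that Gaussianity is preserved under such a limit. A minor point worth checking is that a single parameter $\delta$ simultaneously drives all $q$ difference quotients, so that $\{S_\delta\}$ is genuinely a one-parameter family to which the Lemma applies verbatim; this is immediate from the construction, and the degenerate case (a linear combination of variance zero) is already covered by the statement of the Lemma.
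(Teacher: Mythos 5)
Your proof is correct and takes essentially the same approach as the paper: reduce to an arbitrary finite linear combination $\sum_i a_i f(\omega,x_i)+\sum_j b_j Df(\omega,y_j)$, replace each $Df(\omega,y_j)$ by a difference quotient so the approximant is a linear combination of values of $f$ (hence Gaussian), and pass to the mean-square limit via Lemma~\ref{lemma}, with Theorem~\ref{theorem2} supplying the convergence. The only difference is cosmetic but in your favor: the paper gives each difference quotient its own parameter $\delta_j$ and takes iterated mean-square limits, implicitly invoking the Lemma $q$ times, whereas your single parameter $\delta$ drives all $q$ quotients at once, so one explicit $L^2$ triangle-inequality estimate and a single application of the Lemma suffice.
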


\begin{proof}
  For any $p, q \in \mathbb{N}^+\cup \{0\}$ such that $p+q\ge 1$, any $x_1, \cdots, x_p, y_1, \cdots, y_q \in \mathbb{R}$, and any \\ $c_1, \cdots, c_p, d_1, \cdots, d_q \in \mathbb{R}$, we have the linear combination:
  \begin{equation}
    \begin{aligned}
      &\; \sum_{i=1}^{p} c_i \tilde{f}(\omega, x_i, 0) + \sum_{j=1}^{q} d_j
      \tilde{f}(\omega, y_j, 1) \\
      =&\; \sum_{i=1}^{p} c_i f(\omega, x_i) + \sum_{j=1}^{q} d_j Df(\omega, y_j)  \\
      =&\; \sum_{i=1}^{p} c_i f(\omega, x_i) + \sum_{j=1}^{q} d_j \lim_{\delta_j \to 0}\frac{f(\omega, y_j+\delta_j)-f(\omega, y_j)}{\delta_j}   \\
      =&\; \lim_{\delta_1 \to 0} \cdots \lim_{\delta_q \to 0}\left\{\sum_{i=1}^{p} c_i f(\omega, x_i) + \sum_{j=1}^{q} d_j \frac{f(\omega, y_j+\delta_j)-f(\omega, y_j)}{\delta_j}\right\},
    \end{aligned}
  \end{equation}
  where the limits are taken in mean-square sense. Since $f$ is a Gaussian random field,
  \begin{equation}
    \sum_{i=1}^{p} c_i f(\omega, x_i) + \sum_{j=1}^{q} d_j \frac{f(\omega, y_j+\delta_j)-f(\omega, y_j)}{\delta_j}
  \end{equation}
  has Gaussian distribution for any $\delta_1, \cdots, \delta_q \in \mathbb{R} - \{0\}$. By Lemma~\ref{lemma}, the limit has Gaussian distribution. As this holds for every linear combination, $\tilde{f}$ is a Gaussian random field.
\end{proof}

After proving the augmented Gaussian random field is well defined, we calculate its mean and covariance functions.

\begin{theorem}\label{theorem4}
  Let $f$ be a Gaussian random field on $\mathbb{R}$ with mean function $m(x)$ and covariance function $k(x,x')$ such that $m(x)$ is differentiable and $k(x,x')$ is twice continuously differentiable. Then the augmented random field $\tilde{f}$ of $f$ has mean function:
  \begin{equation}
    \tilde{m}: \mathbb{R} \times \{0,1\} \rightarrow \mathbb{R}
  \end{equation}
  such that
  \begin{equation}
    \begin{aligned}
      \tilde{m}(x,0) &= m(x),  \\
      \tilde{m}(x,1) &= \frac{\dif}{\dif x} m(x),
    \end{aligned}
  \end{equation}
  and covariance function:
  \begin{equation}
    \tilde{k}: \mathbb{R} \times \{0,1\} \times \mathbb{R} \times \{0,1\}
    \rightarrow \mathbb{R}
  \end{equation}
  such that
  \begin{equation}
    \begin{aligned}
      \tilde{k}(x,0,x',0) &= k(x,x'), \\
      \tilde{k}(x,0,x',1) &= \frac{\partial}{\partial x'} k(x,x'), \\
      \tilde{k}(x,1,x',0) &= \frac{\partial}{\partial x} k(x,x'),  \\
      \tilde{k}(x,1,x',1) &= \frac{\partial^2}{\partial x \partial x'} k(x,x').
    \end{aligned}
  \end{equation}
\end{theorem}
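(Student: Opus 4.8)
The plan is to exploit Theorem~\ref{theorem3}, which guarantees that $\tilde{f}$ is a Gaussian random field; hence $\tilde{f}$ is completely determined by its mean and covariance functions, and it suffices to evaluate the expectations defining $\tilde{m}$ and $\tilde{k}$ on each of the ``slots'' in $\{0,1\}$. The single analytic tool I would isolate first is the continuity of the $L^2$ inner product under mean-square convergence: if $\xi_\delta \xrightarrow{\text{m.s.}} \xi_0$ and $\zeta_\tau \xrightarrow{\text{m.s.}} \zeta_0$, then $\mathrm{E}[\xi_\delta \zeta_\tau] \to \mathrm{E}[\xi_0 \zeta_0]$. This follows from the Cauchy--Schwarz estimate $|\mathrm{E}[\xi_\delta\zeta_\tau] - \mathrm{E}[\xi_0\zeta_0]| \le \sqrt{\mathrm{E}[|\xi_\delta-\xi_0|^2]}\,\sqrt{\mathrm{E}[\zeta_\tau^2]} + \sqrt{\mathrm{E}[\xi_0^2]}\,\sqrt{\mathrm{E}[|\zeta_\tau-\zeta_0|^2]}$ together with the boundedness of $\mathrm{E}[\zeta_\tau^2]$ along a convergent family (exactly as argued in the proof of Lemma~\ref{lemma}). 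This is the same mechanism already used implicitly in the proof of Theorem~\ref{theorem2}.

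For the mean I would handle the two slots separately. The slot $(x,0)$ is immediate since $\tilde{f}(\omega,x,0)=f(\omega,x)$, giving $\tilde{m}(x,0)=m(x)$. For the slot $(x,1)$, I write $Df(\omega,x)$ as the mean-square limit of $\xi_\delta = \delta^{-1}(f(\omega,x+\delta)-f(\omega,x))$ provided by Theorem~\ref{theorem2}; each $\xi_\delta$ is Gaussian with mean $\mu_\delta=\delta^{-1}(m(x+\delta)-m(x))$, so Lemma~\ref{lemma} yields $\tilde{m}(x,1)=\mathrm{E}[Df(\omega,x)]=\lim_{\delta\to 0}\mu_\delta=m'(x)$ by the very definition of the derivative.

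For the covariance I would treat the four slot-pairs in turn, centering at the end of each. The pair $(0,0)$ is the definition of $k$. For $(0,1)$ I hold $f(\omega,x)$ fixed and pass to the limit in $x'$: expanding $\mathrm{E}[f(\omega,x)\,\zeta_\tau]$ via $\mathrm{E}[f(a)f(b)]=k(a,b)+m(a)m(b)$ and using inner-product continuity gives $\mathrm{E}[f(\omega,x)Df(\omega,x')]=\partial_{x'}k(x,x')+m(x)m'(x')$, so subtracting $\tilde{m}(x,0)\tilde{m}(x',1)=m(x)m'(x')$ leaves $\tilde{k}(x,0,x',1)=\partial_{x'}k(x,x')$; the pair $(1,0)$ is identical with the roles of $x$ and $x'$ exchanged. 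The pair $(1,1)$ is the crux: I approximate both arguments by difference quotients $\xi_\delta$ (at $x$) and $\zeta_\tau$ (at $x'$) and expand $\mathrm{E}[\xi_\delta\zeta_\tau]$ into a second-order difference quotient of $k$ plus the product of two first-order difference quotients of $m$, exactly as in the central display in the proof of Theorem~\ref{theorem2} but with the two base points kept distinct. Passing to the joint limit $(\delta,\tau)\to(0,0)$ gives $\mathrm{E}[Df(\omega,x)Df(\omega,x')]=\partial_x\partial_{x'}k(x,x')+m'(x)m'(x')$, and centering by $m'(x)m'(x')$ yields $\tilde{k}(x,1,x',1)=\partial_x\partial_{x'}k(x,x')$.

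The step I expect to be the main obstacle is the $(1,1)$ case, and within it the identification of the joint double limit of the second-order difference quotient $\frac{1}{\delta\tau}\{k(x+\delta,x'+\tau)-k(x+\delta,x')-k(x,x'+\tau)+k(x,x')\}$ with the mixed partial $\partial_x\partial_{x'}k(x,x')$. Two things must be reconciled: inner-product continuity delivers the limit as $(\delta,\tau)\to(0,0)$ \emph{jointly} rather than iteratively, so I must argue that the difference-quotient expression also possesses this joint limit. This is where the full strength of ``$k$ twice \emph{continuously} differentiable'' enters: via a two-variable mean-value (or Taylor) argument the quotient equals $\partial_x\partial_{x'}k$ evaluated at an intermediate point, and continuity of $\partial_x\partial_{x'}k$ then forces convergence to $\partial_x\partial_{x'}k(x,x')$ regardless of how $\delta,\tau\to 0$. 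The remaining bookkeeping---splitting each $\mathrm{E}[f(a)f(b)]$ into its covariance and mean parts and checking that the mean contributions cancel upon centering---is routine and mirrors calculations already carried out in the proof of Theorem~\ref{theorem2}.
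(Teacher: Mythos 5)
Your proposal is correct, and for the mean and the mixed entries $\tilde{k}(x,0,x',1)$, $\tilde{k}(x,1,x',0)$ it coincides with the paper's argument (your ``inner-product continuity'' lemma is exactly the Cauchy--Schwarz add-and-subtract step the paper writes out explicitly). Where you genuinely diverge is the $(1,1)$ entry. The paper never forms a joint double limit there: having already established $\mathrm{E}[Df(\omega,x)f(\omega,y)] = \frac{\partial}{\partial x}k(x,y) + m'(x)m(y)$ for every $y$ (a byproduct of the $(1,0)$ computation), it approximates only the \emph{second} factor $Df(\omega,x')$ by a difference quotient, so the covariance entry reduces to the single one-variable limit $\lim_{\delta\to 0}\delta^{-1}\{\frac{\partial}{\partial x}k(x,x'+\delta)-\frac{\partial}{\partial x}k(x,x')\}$, handled by differentiability of $\partial k/\partial x$ in its second argument (plus equality of mixed partials). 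You instead approximate both factors simultaneously and identify the joint limit of the second-order difference quotient of $k$ via a two-variable mean-value argument. Both routes are valid. Yours is more symmetric and self-contained---it does not depend on having derived the cross-covariance formula first---but it requires the mean-value step you flagged, and you correctly identify that this is where continuity of the mixed partial is indispensable. The paper's iterated route trades that for a dependence on the order of the computations. Notably, the joint-limit issue you confront head-on is the same one the paper's own proof of Theorem~\ref{theorem2} passes over silently when it asserts $\lim_{\delta,\tau\to 0}\mathrm{E}[\xi_\delta\xi_\tau] = k_{12}(x,x)+m'(x)m'(x)$, so your explicit treatment fills in a detail the paper leaves implicit.
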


\begin{proof}
  We use the notation
  \begin{equation}
    m'(x) := \frac{\dif}{\dif x} m(x).
  \end{equation}
  By the definition of $\tilde{f}$, we have
  \begin{equation}
    \tilde{m}(x,0) = \mathrm{E}[\tilde{f}(\omega, x, 0)] = \mathrm{E}[f(\omega, x)] = m(x).
  \end{equation}
  By Theorem~\ref{theorem2} and Lemma~\ref{lemma}, we have
  \begin{equation}
    \begin{aligned}
      \tilde{m}(x,1) &= \mathrm{E}[\tilde{f}(\omega, x, 1)]
       = \mathrm{E}[Df(\omega, x)]  \\
      &= \mathrm{E}\left[\lim_{\delta \to 0} \frac{f(\omega, x+\delta)-f(\omega, x)}{\delta}\right]  \\
      &= \lim_{\delta \to 0} \mathrm{E}\left[\frac{f(\omega, x+\delta)-f(\omega, x)}{\delta}\right]  \\
      &= \lim_{\delta \to 0} \frac{m(x+\delta)-m(x)}{\delta}  \\
      &= m'(x).
    \end{aligned}
  \end{equation}
  Similarly, by definition,
  \begin{equation}
    \begin{aligned}
      \tilde{k}(x,0,x',0) &= \mathrm{E}[(\tilde{f}(\omega, x, 0) - \tilde{m}(x,0))(\tilde{f}(\omega, x', 0) - \tilde{m}(x',0))]  \\
      &= \mathrm{E}[(f(\omega, x) - m(x))(f(\omega, x') - m(x'))]  \\
      &= k(x,x').
    \end{aligned}
  \end{equation}
  Also,
  \begin{equation}
    \begin{aligned}
      &\; \tilde{k}(x,0,x',1) \\
      =&\; \mathrm{E}[\tilde{f}(\omega, x, 0)\tilde{f}(\omega, x', 1)] - \mathrm{E}[\tilde{f}(\omega, x, 0)] \mathrm{E}[\tilde{f}(\omega, x', 1)]  \\
      =&\; \mathrm{E}[f(\omega, x)Df(\omega, x')] - m(x)m'(x')  \\
      =&\; \mathrm{E}\left[f(\omega, x)\left(Df(\omega, x')-\frac{f(\omega,
      x'+\delta) - f(\omega, x')}{\delta}\right)\right] \\
      &\; + \mathrm{E}\left[f(\omega, x)\frac{f(\omega, x'+\delta)  - f(\omega, x')}{\delta}\right] - m(x)m'(x')  \\
      =&\; \mathrm{E}\left[f(\omega, x)\left(Df(\omega, x')-\frac{f(\omega, x'+\delta) - f(\omega, x')}{\delta}\right)\right]  \\
      &\; + \frac{k(x,x'+\delta) + m(x)m(x'+\delta) - k(x,x') - m(x)m(x')}{\delta} - m(x)m'(x')  \\
      =&\; \mathrm{E}\left[f(\omega, x)\left(Df(\omega, x')-\frac{f(\omega, x'+\delta) - f(\omega, x')}{\delta}\right)\right]  \\
      &\; + \frac{k(x,x'+\delta) - k(x,x')}{\delta} + m(x)\left(\frac{m(x'+\delta)- m(x')}{\delta} - m'(x')\right).
    \end{aligned}
  \end{equation}
  Since
  \begin{equation}
    \begin{aligned}
      &\; \left|\mathrm{E}\left[f(\omega, x)\left(Df(\omega, x')-\frac{f(\omega, x'+\delta) - f(\omega, x')}{\delta}\right)\right]\right| \\
      \le&\; \mathrm{E}\left|f(\omega, x)\left(Df(\omega, x')-\frac{f(\omega, x'+\delta) - f(\omega, x')}{\delta}\right)\right|  \\
      \le&\; \sqrt{\mathrm{E}[|f(\omega, x)|^2]}\sqrt{\mathrm{E}\left[\left|Df(\omega, x')-\frac{f(\omega, x'+\delta) - f(\omega, x')}{\delta}\right|^2\right]},
    \end{aligned}
  \end{equation}
  let $\delta \to 0$ and we have
  \begin{equation}
    \tilde{k}(x,0,x',1) = \frac{\partial}{\partial x'} k(x,x').
  \end{equation}
  Similarly,
  \begin{equation}
    \tilde{k}(x,1,x',0) = \frac{\partial}{\partial x} k(x,x').
  \end{equation}
  Finally,
  \begin{equation}
    \begin{aligned}
      &\; \tilde{k}(x,1,x',1) \\
      =&\; \mathrm{E}[Df(\omega, x)Df(\omega, x')] - m'(x)m'(x') \\
      =&\; \mathrm{E}\left[Df(\omega, x)\left(Df(\omega, x')-\frac{f(\omega,
      x'+\delta) - f(\omega, x')}{\delta}\right)\right] \\
      &\; + \mathrm{E}\left[Df(\omega, x)\frac{f(\omega, x'+\delta) - f(\omega, x')}{\delta}\right]
      - m'(x)m'(x')  \\
      =&\; \mathrm{E}\left[Df(\omega, x)\left(Df(\omega, x')-\frac{f(\omega,
      x'+\delta) - f(\omega, x')}{\delta}\right)\right] \\
      &\; + \frac{\mathrm{E}[Df(\omega, x)f(\omega, x'+\delta)] - \mathrm{E}[Df(\omega, x)f(\omega, x')]}{\delta} - m'(x)m'(x')  \\
      =&\; \mathrm{E}\left[Df(\omega, x)\left(Df(\omega, x')-\frac{f(\omega, x'+\delta) - f(\omega, x')}{\delta}\right)\right]  \\
      &\; + \dfrac{\frac{\partial}{\partial x}k(x,x'+\delta) + m'(x)m(x'+\delta) -
      \frac{\partial}{\partial x}k(x,x') - m'(x)m(x')}{\delta} - m'(x)m'(x')  \\
      =&\; \mathrm{E}\left[Df(\omega, x)\left(Df(\omega, x')-\frac{f(\omega, x'+\delta) - f(\omega, x')}{\delta}\right)\right]  \\
      &\; + \dfrac{\frac{\partial}{\partial x}k(x,x'+\delta) - \frac{\partial}{\partial x}k(x,x')}{\delta} + m'(x)\left(\frac{m(x'+\delta) - m(x')}{\delta} - m'(x')\right).
    \end{aligned}
  \end{equation}
  Let $\delta \to 0$ and use the fact that $k(x,x')$ is twice continuously differentiable,
  \begin{equation}
    \tilde{k}(x,1,x',1) = \frac{\partial^2}{\partial x \partial x'} k(x,x').
  \end{equation}
\end{proof}

\begin{corollary}\label{corollary1}
  Let $f$ be a Gaussian random field on $\mathbb{R}$ with mean function $m(x)$ and covariance function $k(x,x')$ such that $m(x)$ is differentiable and $k(x,x')$ is twice continuously differentiable. Then the derivative random field
  $Df$ of $f$ is a Gaussian random field on $\mathbb{R}$ with mean function $\dif m(x)/\dif x$ and covariance function $\partial^2 k(x,x')/\partial x \partial x'$.
\end{corollary}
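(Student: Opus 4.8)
The plan is to obtain the corollary as a direct consequence of Theorems~\ref{theorem3} and~\ref{theorem4}, by viewing $Df$ as the restriction of the augmented field $\tilde{f}$ to the slice $\mathbb{R}\times\{1\}$. First I would recall that, by the definition of the augmented random field, $Df(\omega,x)=\tilde{f}(\omega,x,1)$ for every $x\in\mathbb{R}$. Hence testing the Gaussianity of $Df$ on a finite index set $x_1,\dots,x_p\in\mathbb{R}$ amounts to examining the vector $(\tilde{f}(\omega,x_1,1),\dots,\tilde{f}(\omega,x_p,1))$.

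Second, I would invoke Theorem~\ref{theorem3}, which guarantees that $\tilde{f}$ is a Gaussian random field on $\mathbb{R}\times\{0,1\}$. By the definition of a Gaussian random field, every finite collection of evaluations of $\tilde{f}$ is jointly multivariate Gaussian; in particular the collection indexed by $(x_1,1),\dots,(x_p,1)$ is multivariate Gaussian. Since this vector coincides with $(Df(\omega,x_1),\dots,Df(\omega,x_p))$ and $x_1,\dots,x_p$ were arbitrary, $Df$ satisfies the defining property of a Gaussian random field on $\mathbb{R}$. The key conceptual observation is simply that restricting the index set of a Gaussian random field (from $\mathbb{R}\times\{0,1\}$ down to $\mathbb{R}\times\{1\}\cong\mathbb{R}$) preserves Gaussianity, because the finite-dimensional distributions of the restriction form a subfamily of those of the original field.

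Finally, I would read off the mean and covariance functions from Theorem~\ref{theorem4}. The mean function of $Df$ is $x\mapsto\tilde{m}(x,1)=\dif m(x)/\dif x$, and its covariance function is $(x,x')\mapsto\tilde{k}(x,1,x',1)=\partial^2 k(x,x')/\partial x\partial x'$, which are exactly the functions claimed.

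Since the heavy lifting—existence of the mean-square limit (Theorem~\ref{theorem2}), Gaussianity of the augmented field (Theorem~\ref{theorem3}), and the explicit computation of $\tilde{m}$ and $\tilde{k}$ (Theorem~\ref{theorem4})—has already been carried out, I anticipate no genuine obstacle. The only point requiring any care is the elementary remark that a Gaussian random field remains Gaussian upon restriction to a subset of its index space, with the mean and covariance functions of the restriction being the corresponding restrictions of the original ones.
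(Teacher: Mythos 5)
Your proposal is correct and follows exactly the paper's route: the paper also derives the corollary directly from Theorems~\ref{theorem2}, \ref{theorem3}, and \ref{theorem4}, with $Df$ identified as the slice $\tilde f(\cdot,\cdot,1)$ of the augmented field. Your write-up merely makes explicit the (correct) observation that restricting a Gaussian random field to a subset of its index set preserves Gaussianity, which the paper leaves implicit.
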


\begin{proof}[Sketch of proof]
  This is a direct conclusion from Theorem~\ref{theorem2}, Theorem~\ref{theorem3}, and Theorem~\ref{theorem4}.
\end{proof}

\subsection{Extensions}
The definition of the aforementioned augmented Gaussian random field can be extended to more general cases involving higher order derivatives.

\begin{definition}
  Let $f$ be a Gaussian random field on $\mathbb{R}$ such that the derivative random field $Df$ is a Gaussian random field on $\mathbb{R}$ with differentiable mean function and twice continuously differentiable covariance function. By Corollary~\ref{corollary1}, the derivative random field $D(Df)$ of $Df$ is a Gaussian random field on $\mathbb{R}$. Define the second order derivative random field of $f$ as $D^2 f = D(Df)$. Recursively, define the $n^{\text{th}}$ order derivative random field of $f$ as $D^n f = D(D^{n-1} f)$ when $D^{n-1} f$ is a Gaussian random field on $\mathbb{R}$ with differentiable mean function and twice continuously differentiable covariance function.
\end{definition}

\begin{corollary}\label{corollary2}
  Let $f$ be a Gaussian random field on $\mathbb{R}$ with mean function $m(x)$ and covariance function $k(x,x')$ such that $m(x)$ is $n$ times differentiable and $k(x,x')$ is $2n$ times continuously differentiable ($n \in \mathbb{N}^+$). Then $Df, \cdots, D^n f$ are well-defined and are Gaussian random fields on $\mathbb{R}$.
\end{corollary}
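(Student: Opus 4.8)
The plan is to argue by induction on $j$, establishing simultaneously that each $D^j f$ (for $0 \le j \le n$) is a well-defined Gaussian random field on $\mathbb{R}$ and that its mean and covariance functions are obtained from those of $f$ by the expected differentiations, namely $m^{(j)}(x)$ and $k^{(j)}(x,x') := \partial^{2j} k(x,x')/(\partial x^j \partial x'^{j})$. Carrying these explicit formulas through the induction is what makes the recursion close: it lets me read off exactly how much smoothness survives at each stage, which is precisely the information the recursive definition of $D^n f$ demands. Setting $D^0 f = f$, the base case $j=0$ is just the hypothesis on $f$.

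For the inductive step, suppose $0 \le j \le n-1$ and that $D^j f$ is already known to be a Gaussian random field with mean $m^{(j)}$ and covariance $k^{(j)}$. I would first verify that $D^j f$ satisfies the smoothness hypotheses required to invoke Theorem~\ref{theorem2} and Corollary~\ref{corollary1}: since $m$ is $n$ times differentiable and $j \le n-1$, the mean $m^{(j)}$ is differentiable; since $k$ is $2n$ times continuously differentiable and $j \le n-1$, the covariance $k^{(j)}$ is $2(n-j) \ge 2$ times continuously differentiable, hence in particular twice continuously differentiable. Theorem~\ref{theorem2} then guarantees that the derivative random field $D(D^j f) = D^{j+1} f$ exists, and Corollary~\ref{corollary1} guarantees it is a Gaussian random field with mean $(m^{(j)})' = m^{(j+1)}$ and covariance $\partial^2 k^{(j)}/(\partial x \partial x') = k^{(j+1)}$. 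This advances the induction from $j$ to $j+1$, and running it up to $j=n$ yields that $Df, \ldots, D^n f$ are all well-defined Gaussian random fields on $\mathbb{R}$.

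The only real obstacle is bookkeeping: I must confirm that the smoothness budget is spent at the correct rate, namely that each application of $D$ consumes one derivative of the mean and one derivative in each of the two arguments of the covariance, so that reaching $D^n f$ requires exactly $n$ derivatives of $m$ and $2n$ of $k$. This matches the hypotheses precisely, with nothing to spare at the final step, which is why the constants $n$ and $2n$ appear as they do. A secondary point to check is that the iterated mixed partials defining $k^{(j)}$ are unambiguous and continuous; this follows from $k \in C^{2n}$ by the symmetry of mixed partial derivatives, which also ensures that applying Corollary~\ref{corollary1} to $k^{(j)}$ indeed reproduces $k^{(j+1)}$ independently of the order in which the derivatives are taken.
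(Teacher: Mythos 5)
Your proposal is correct and follows essentially the same route as the paper, whose proof is precisely the one-line remark that Corollary~\ref{corollary1} can be applied recursively. You have simply made the recursion explicit: carrying the formulas $m^{(j)}$ and $\partial^{2j}k/(\partial x^j\,\partial x'^j)$ through the induction is exactly the bookkeeping needed to verify the smoothness hypotheses of Corollary~\ref{corollary1} at each step, so your write-up is a valid elaboration of the paper's sketch rather than a different argument.
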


\begin{proof}[Sketch of proof]
  This corollary can be proved by applying Corollary~\ref{corollary1} recursively.
\end{proof}

Now we can define the general augmented Gaussian random field involving higher order derivatives.

\begin{definition}
  Let $f$ be a Gaussian random field on $\mathbb{R}$ with mean function $m(x)$ and covariance function $k(x,x')$ such that $m(x)$ is $n$ times differentiable and $k(x,x')$ is $2n$ times continuously differentiable ($n \in \mathbb{N}^+$). Define the $n^{\text{th}}$ order augmented random field of $f$ as:
  \begin{equation}
    \tilde{f}^n: \Omega \times \mathbb{R} \times \{0,1,\cdots,n\}
    \rightarrow\mathbb{R}
  \end{equation}
  such that
  \begin{equation}
    \begin{aligned}
      \tilde{f}^n(\omega, x, 0) &= f(\omega, x)   \\
      \tilde{f}^n(\omega, x, 1) &= Df(\omega, x)   \\
      &\vdots      \\
      \tilde{f}^n(\omega, x, n) &= D^n f(\omega, x).
    \end{aligned}
  \end{equation}
\end{definition}

\begin{theorem}\label{theorem5}
  Let $f$ be a Gaussian random field on $\mathbb{R}$ with mean function $m(x)$ and covariance function $k(x,x')$ such that $m(x)$ is $n$ times differentiable and $k(x,x')$ is $2n$ times continuously differentiable ($n \in \mathbb{N}^+$). Then the $n^{\text{th}}$ order augmented random field $\tilde{f}^n$ of $f$ is a Gaussian random field on $\mathbb{R} \times \{0,1,\cdots,n\}$.
\end{theorem}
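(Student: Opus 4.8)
The plan is to argue by induction on $n$, mirroring the structure of the proof of Theorem~\ref{theorem3}, which is precisely the base case $n=1$. Suppose the result holds for $n-1$, so that $\tilde{f}^{n-1}$ is a Gaussian random field on $\mathbb{R}\times\{0,1,\cdots,n-1\}$. To verify that $\tilde{f}^n$ is Gaussian, I would fix an arbitrary finite linear combination of its components and show that it has a univariate Gaussian distribution. Grouping the evaluations by derivative order (with $D^0 f = f$), such a combination can be written as
\begin{equation}
  \sum_{k=0}^{n-1}\sum_{i} c_{k,i}\, D^k f(\omega, x_{k,i}) + \sum_{j=1}^{q} d_j\, D^n f(\omega, y_j),
\end{equation}
where the $c_{k,i}, d_j$ and $x_{k,i}, y_j$ are arbitrary reals and points. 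The idea is to isolate the highest-order ($k=n$) terms, replace each of them by the mean-square difference quotient of $D^{n-1}f$ that defines it, and pass to the limit.

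More concretely, since $m(x)$ is $n$ times differentiable and $k(x,x')$ is $2n$ times continuously differentiable, Corollary~\ref{corollary2} together with the recursive mean/covariance computation of Theorem~\ref{theorem4} guarantees that $D^{n-1}f$ is a Gaussian random field satisfying the hypotheses of Theorem~\ref{theorem2}. Hence, for each top-order term,
\begin{equation}
  \frac{D^{n-1}f(\omega, y_j+\delta_j)-D^{n-1}f(\omega, y_j)}{\delta_j}\xrightarrow[\delta_j\to 0]{\text{m.s.}} D^n f(\omega, y_j).
\end{equation}
Replacing the $k=n$ terms by these difference quotients produces, for each fixed choice of the $\delta_j$, a finite linear combination of values of $D^0 f,\cdots,D^{n-1}f$ only, i.e.\ a linear combination of components of $\tilde{f}^{n-1}$. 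By the induction hypothesis this pre-limit expression is a univariate Gaussian random variable. Taking the mean-square limits $\delta_1\to 0,\cdots,\delta_q\to 0$ one at a time and invoking Lemma~\ref{lemma} at each stage---which ensures that the mean-square limit of Gaussian random variables is again Gaussian---shows that the original linear combination is Gaussian. Since the combination was arbitrary, $\tilde{f}^n$ is a Gaussian random field, completing the induction.

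The main obstacle is the iterated-limit bookkeeping in the inductive step: there may be several top-order evaluation points, so the limits must be taken successively, exactly as in the proof of Theorem~\ref{theorem3}, and at each stage one must check that the object to which Lemma~\ref{lemma} is applied is genuinely a mean-square-convergent family of Gaussian variables. This is not conceptually difficult---each intermediate limit merely trades one difference quotient for the corresponding $D^n f$ value while leaving a linear combination of lower-order (hence, by the induction hypothesis, jointly Gaussian) terms---but it requires care to phrase the successive application of Lemma~\ref{lemma} cleanly. A minor preliminary point worth confirming is that $D^{n-1}f$ indeed inherits the smoothness needed for Theorem~\ref{theorem2}; this is where the $2n$-fold differentiability of $k(x,x')$ is consumed, two derivatives per order, and it is already packaged in Corollary~\ref{corollary2}.
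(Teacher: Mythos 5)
Your proposal is correct and follows essentially the same route the paper intends: its proof of Theorem~\ref{theorem5} is only a sketch deferring to the argument of Theorem~\ref{theorem3}, and your induction on $n$---replacing each top-order term $D^n f(\omega,y_j)$ by the difference quotient of $D^{n-1}f$ (justified via Corollary~\ref{corollary2} and Theorem~\ref{theorem2}), noting the pre-limit expression is Gaussian, and applying Lemma~\ref{lemma} at each successive mean-square limit---is exactly that argument with $f$ replaced by the lower-order augmented field. The bookkeeping caveat you raise (intermediate stages contain already-limited $D^n f$ terms, whose Gaussianity comes from the preceding Lemma~\ref{lemma} application rather than the induction hypothesis) is handled correctly in your middle paragraph and is no worse than the informality already present in the paper's own proof of Theorem~\ref{theorem3}.
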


\begin{proof}[Sketch of proof]
  This can be proved in a similar way to the proof of Theorem~\ref{theorem3}.
\end{proof}

The following theorem calculates the mean and covariance functions of the $n^{\text{th}}$ order augmented Gaussian random field.

\begin{theorem}\label{theorem6}
  Let $f$ be a Gaussian random field on $\mathbb{R}$ with mean function $m(x)$ and covariance function $k(x,x')$ such that $m(x)$ is $n$ times differentiable and $k(x,x')$ is $2n$ times continuously differentiable ($n \in \mathbb{N}^+$). Then the $n^{\text{th}}$ order augmented random field $\tilde{f}^n$ of $f$ has mean function:
  \begin{equation}
    \begin{aligned}
      \tilde{m}^n : & ~\mathbb{R} \times \{0,1,\cdots,n\} \rightarrow \mathbb{R} \\
      & \tilde{m}^n (x,i) = \dfrac{\dif^i}{\dif x^i} m(x),
    \end{aligned}
  \end{equation}
  and covariance function:
  \begin{equation}
    \begin{aligned}
      \tilde{k}^n: & ~\mathbb{R} \times \{0,1,\cdots,n\} \times \mathbb{R} \times
      \{0,1,\cdots,n\} \rightarrow \mathbb{R}  \\
      & \tilde{k}^n (x,i,x',j) = \dfrac{\partial^{i+j}}{\partial x^i \partial x'^j} k(x,x').
    \end{aligned}
  \end{equation}
\end{theorem}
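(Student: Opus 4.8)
The plan is to argue by induction, reducing everything to repeated application of Corollary~\ref{corollary1} together with the difference-quotient technique already used in the proof of Theorem~\ref{theorem4}. By Corollary~\ref{corollary2}, each $D^i f$ ($0 \le i \le n$) is a well-defined Gaussian random field on $\mathbb{R}$, so in particular every $D^i f(\omega, x)$ lies in $L^2(\Omega)$; this integrability is what makes the Cauchy-Schwarz estimates below legitimate. The mean is the easy part: since Corollary~\ref{corollary1} says that the mean function of the derivative field is the derivative of the mean function, a one-line induction on $i$ gives $\tilde{m}^n(x,i) = \mathrm{E}[D^i f(\omega, x)] = \dfrac{\dif^i}{\dif x^i} m(x)$.

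For the covariance I would prove, for all $0 \le i,j \le n$,
\[
  \mathrm{Cov}(D^i f(\omega,x),\, D^j f(\omega,x')) = \frac{\partial^{i+j}}{\partial x^i \partial x'^j} k(x,x'),
\]
by induction on $j$, with an inner induction on $i$ for the base case $j=0$. The base case $i=j=0$ is the definition of $k$. To pass from $j-1$ to $j$, I would use that $D^j f = D(D^{j-1}f)$, so that $D^j f(\omega, x')$ is the mean-square limit of the difference quotients $\delta^{-1}(D^{j-1}f(\omega, x'+\delta) - D^{j-1}f(\omega, x'))$ furnished by Theorem~\ref{theorem2}. Writing the covariance through these difference quotients and exchanging the limit with the expectation yields
\[
  \mathrm{Cov}(D^i f(\omega,x),\, D^j f(\omega,x')) = \lim_{\delta\to 0}\frac{1}{\delta}\Big(\tfrac{\partial^{i+j-1}}{\partial x^i \partial x'^{j-1}}k(x,x'+\delta) - \tfrac{\partial^{i+j-1}}{\partial x^i \partial x'^{j-1}}k(x,x')\Big),
\]
where the inner partial derivatives come from the inductive hypothesis; the limit is exactly $\partial^{i+j}/\partial x^i \partial x'^j\, k(x,x')$ because $k$ is $2n$ times continuously differentiable. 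The base case $j=0$ is handled identically, differencing in the $x$ variable and inducting on $i$ starting from $\mathrm{Cov}(f(\omega,x), f(\omega,x')) = k(x,x')$.

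The crux, just as in Theorem~\ref{theorem4}, is justifying the interchange of the limit and the expectation. Concretely, I must show that the error term
\[
  \mathrm{E}\!\left[D^i f(\omega,x)\Big(D^j f(\omega,x') - \frac{D^{j-1}f(\omega,x'+\delta)-D^{j-1}f(\omega,x')}{\delta}\Big)\right]
\]
vanishes as $\delta \to 0$. This follows from Cauchy-Schwarz: it is bounded by $\sqrt{\mathrm{E}[|D^i f(\omega,x)|^2]}$ times the $L^2$-norm of the difference quotient's deviation from $D^j f(\omega,x')$, and the latter tends to $0$ by the mean-square convergence in Theorem~\ref{theorem2}, while the former is finite by Corollary~\ref{corollary2}. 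I expect this interchange, rather than any of the algebra, to be the only real obstacle; everything else is differentiability bookkeeping, and the hypothesis that $k$ is $2n$ times continuously differentiable is precisely what guarantees that all the partial derivatives appearing along the induction exist and that the relevant difference quotients converge to them.
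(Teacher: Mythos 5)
Your proposal is correct and takes essentially the same approach as the paper: the paper's own proof (given only as a sketch) likewise anchors the base case $i,j\in\{0,1\}$ in Theorem~\ref{theorem4} and then inducts on the derivative order using the same difference-quotient representation from Theorem~\ref{theorem2} together with the Cauchy--Schwarz interchange of limit and expectation. Your write-up simply supplies the details (including the $L^2$-integrability from Corollary~\ref{corollary2}) that the paper's sketch leaves implicit.
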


\begin{proof}[Sketch of proof]
  When $i,j \in \{0,1\}$, by Theorem~\ref{theorem4}, the formulas hold. Then this theorem can be proved by using induction and following a similar way to the proof of Theorem~\ref{theorem4}.
\end{proof}

Furthermore, we can extend the augmented Gaussian random field to the infinite order case, and calculate the mean and covariance functions accordingly.

\begin{definition}
  Let $f$ be a Gaussian random field on $\mathbb{R}$ with mean function $m(x)$ and covariance function $k(x,x')$ such that $m(x)$ and $k(x,x')$ are smooth. Define the infinite order augmented random field of $f$ as:
  \begin{equation}
    \tilde{f}^\infty: \Omega \times \mathbb{R} \times \mathbb{N}
    \rightarrow \mathbb{R}
  \end{equation}
  such that
  \begin{equation}
    \begin{aligned}
      \tilde{f}^\infty(\omega, x, 0) &= f(\omega, x)  \\
      \tilde{f}^\infty(\omega, x, 1) &= Df(\omega, x)  \\
      &\vdots      \\
      \tilde{f}^\infty(\omega, x, n) &= D^n f(\omega, x)  \\
      &\vdots
    \end{aligned}
  \end{equation}
\end{definition}

\begin{theorem}
  Let $f$ be a Gaussian random field on $\mathbb{R}$ with mean function $m(x)$ and covariance function $k(x,x')$ such that $m(x)$ and $k(x,x')$ are smooth. Then the infinite order augmented random field $\tilde{f}^\infty$ of $f$ is a Gaussian random field on $\mathbb{R} \times \mathbb{N}$.
\end{theorem}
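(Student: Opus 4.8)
The plan is to reduce the infinite-order statement to the finite-order Theorem~\ref{theorem5}, exploiting the fact that Gaussianity of a random field is a condition on its finite-dimensional marginals only. By the definition of a Gaussian random field, it suffices to show that for every finite collection of indices $(x_1, n_1), \ldots, (x_p, n_p) \in \mathbb{R} \times \mathbb{N}$ and every $c_1, \ldots, c_p \in \mathbb{R}$, the linear combination
\begin{equation}
  \sum_{l=1}^{p} c_l \tilde{f}^\infty(\omega, x_l, n_l) = \sum_{l=1}^{p} c_l D^{n_l} f(\omega, x_l)
\end{equation}
has a univariate Gaussian distribution.

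First I would observe that smoothness of $m$ and $k$ guarantees, for every $n \in \mathbb{N}$, that $m$ is $n$ times differentiable and $k$ is $2n$ times continuously differentiable; hence by Corollary~\ref{corollary2} the fields $Df, \ldots, D^n f$ are all well-defined Gaussian random fields. In particular $\tilde{f}^\infty$ is genuinely defined on all of $\mathbb{R} \times \mathbb{N}$, and no separate existence argument (e.g.\ via Theorem~\ref{theorem1}) is required---the field is already built from the previously constructed derivative fields rather than reconstructed from consistent marginals.

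The key step is the finite truncation. Given any finite index set as above, set $N = \max\{n_1, \ldots, n_p\}$. Every index $(x_l, n_l)$ then lies in $\mathbb{R} \times \{0, 1, \ldots, N\}$, and by construction $\tilde{f}^\infty(\omega, x_l, n_l) = \tilde{f}^N(\omega, x_l, n_l)$. Since $m$ and $k$ are smooth, they are in particular $N$ times differentiable and $2N$ times continuously differentiable, so Theorem~\ref{theorem5} applies and $\tilde{f}^N$ is a Gaussian random field on $\mathbb{R} \times \{0, 1, \ldots, N\}$. The displayed linear combination is therefore a finite linear combination of values of the Gaussian random field $\tilde{f}^N$, hence univariate Gaussian. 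As the chosen indices and coefficients were arbitrary, every finite linear combination of values of $\tilde{f}^\infty$ is Gaussian, and $\tilde{f}^\infty$ is a Gaussian random field on $\mathbb{R} \times \mathbb{N}$.

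I expect there to be no substantive obstacle: the only conceptual point worth stressing is that the infinite index set $\mathbb{N}$ never needs to be confronted all at once. Because Gaussianity is tested against arbitrary but finite collections of indices, each such test involves only finitely many derivative orders and collapses to the already-established finite-order case. The hard part, such as it is, is simply recognizing that no limiting or consistency argument across infinitely many orders is needed---the truncation $N = \max_l n_l$ does all the work.
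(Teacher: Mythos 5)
Your proof is correct, but it follows a genuinely different route from the paper's. The paper disposes of this theorem by saying it is ``proved in a similar way to the proof in Theorem~\ref{theorem3}'', i.e., it intends to re-run the mean-square-limit machinery: write each value $D^{n_l}f(\omega,x_l)$ appearing in a finite linear combination as an (iterated) mean-square limit of difference quotients, note that the approximating linear combinations are Gaussian because $f$ is a Gaussian random field, and invoke Lemma~\ref{lemma} to conclude that the limit is Gaussian. You instead reduce the infinite-order statement to the already-established finite-order Theorem~\ref{theorem5} by truncation: since Gaussianity is tested only on finite collections of indices, any such collection involves a maximal order $N=\max_l n_l$, the restriction of $\tilde{f}^\infty$ to $\mathbb{R}\times\{0,1,\cdots,N\}$ coincides with $\tilde{f}^N$ by construction (both are assembled from the same recursively defined fields $D^i f$), and smoothness of $m$ and $k$ supplies the hypotheses of Corollary~\ref{corollary2} (so $\tilde{f}^\infty$ is well defined) and of Theorem~\ref{theorem5} for every $N$. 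Your argument is the more economical one: it shows the infinite-order theorem is a purely formal corollary of the finite-order case, with no new analysis and no fresh appeal to Lemma~\ref{lemma}; its only cost is that it inherits whatever rigor Theorem~\ref{theorem5} has (which in the paper is itself only sketched). The paper's route, by contrast, is self-contained relative to Theorem~\ref{theorem3} and Lemma~\ref{lemma} and keeps all three augmentation theorems structurally parallel, but it duplicates work that your truncation observation renders unnecessary.
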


\begin{proof}[Sketch of proof]
  This is proved in a similar way to the proof in Theorem~\ref{theorem3}.
\end{proof}

\begin{theorem}
Let $f$ be a Gaussian random field on $\mathbb{R}$ with mean function $m(x)$ and covariance function $k(x,x')$ such that $m(x)$ and $k(x,x')$ are smooth. Then the infinite order augmented random field $\tilde{f}^\infty$ of $f$ has mean function:
  \begin{equation}
    \begin{aligned}
      \tilde{m}^\infty & : ~\mathbb{R} \times \mathbb{N} \to \mathbb{R}, \\
      & \tilde{m}^\infty (x,i) = \dfrac{\dif^i}{\dif x^i} m(x),
    \end{aligned}
  \end{equation}
  and covariance function:
  \begin{equation}
    \begin{aligned}
      \tilde{k}^\infty & : ~\mathbb{R} \times \mathbb{N} \times \mathbb{R}
      \times \mathbb{N} \to \mathbb{R},  \\
      & \tilde{k}^\infty (x,i,x',j) = \dfrac{\partial^{i+j}}{\partial x^i \partial x'^j} k(x,x').
    \end{aligned}
  \end{equation}
\end{theorem}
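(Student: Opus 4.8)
The plan is to deduce the infinite-order formulas from the already-established finite-order result, Theorem~\ref{theorem6}, by exploiting the fact that the mean function is evaluated at a single index and the covariance function at only two indices at a time. The key structural observation is that the infinite-order field restricts consistently to every finite-order field: for each $i \in \mathbb{N}$ and every $n \ge i$, the recursive definition $D^n f = D(D^{n-1}f)$ forces $\tilde{f}^\infty(\omega, x, i) = D^i f(\omega, x) = \tilde{f}^n(\omega, x, i)$, so the two fields agree on the index set $\{0,1,\dots,n\}$.

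First I would record that the smoothness hypothesis on $m$ and $k$ guarantees, for every $n \in \mathbb{N}^+$, that $m$ is $n$ times differentiable and $k$ is $2n$ times continuously differentiable; hence Corollary~\ref{corollary2} applies for all $n$, the derivative random fields $D^i f$ are well-defined for every $i$, and the hypotheses of Theorem~\ref{theorem6} are met at every finite order. This legitimizes the reduction to the finite-order case.

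For the mean function, I would fix $(x,i) \in \mathbb{R} \times \mathbb{N}$, choose any $n \ge i$, and use the consistency relation above to write
\begin{equation}
  \tilde{m}^\infty(x,i) = \mathrm{E}[\tilde{f}^\infty(\omega,x,i)] = \mathrm{E}[\tilde{f}^n(\omega,x,i)] = \tilde{m}^n(x,i) = \frac{\dif^i}{\dif x^i}m(x),
\end{equation}
where the final equality is Theorem~\ref{theorem6}. For the covariance function, I would fix $(x,i,x',j)$, choose $n \ge \max\{i,j\}$, note that both $\tilde{m}^\infty$ and $\tilde{f}^\infty$ agree with their order-$n$ counterparts on the indices $i$ and $j$, and conclude
\begin{equation}
  \tilde{k}^\infty(x,i,x',j) = \tilde{k}^n(x,i,x',j) = \frac{\partial^{i+j}}{\partial x^i \partial x'^j}k(x,x'),
\end{equation}
again invoking Theorem~\ref{theorem6}.

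There is no substantial obstacle here, as the argument is purely a matter of passing to finite-dimensional marginals; the one point meriting explicit verification is the consistency claim --- that the symbol $D^i f$ denotes the same random field in $\tilde{f}^\infty$ and in each $\tilde{f}^n$ --- which follows directly from the common recursive definition of $D^n f$ and requires no new limiting argument. In particular, unlike the proof that $\tilde{f}^\infty$ is a Gaussian random field, no fresh mean-square limit need be taken, since every index $i$ is already handled at some finite order.
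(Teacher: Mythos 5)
Your proposal is correct, but its architecture differs from the paper's own proof. The paper's proof (a sketch) re-runs the machinery behind the finite-order result: it takes the case $i,j\in\{0,1\}$ from Theorem~\ref{theorem4} as the base case and then performs a fresh induction on the order, repeating the mean-square limit computations from the proof of Theorem~\ref{theorem4}. You instead observe that the mean function involves a single index and the covariance function only two, so every instance of the claimed formulas already lives inside some finite-order field: since $\tilde{f}^\infty(\omega,x,i)=D^if(\omega,x)=\tilde{f}^n(\omega,x,i)$ for every $n\ge i$ (both definitions invoke literally the same recursively defined $D^if$), you fix $(x,i,x',j)$, choose $n\ge\max\{i,j\}$, check that smoothness of $m$ and $k$ supplies the hypotheses of Theorem~\ref{theorem6} at order $n$, and read off both formulas from Theorem~\ref{theorem6}. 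This reduction buys economy and rigor at low cost: no new limiting argument or induction is needed, because the induction is already encapsulated in Theorem~\ref{theorem6}, and it makes explicit the consistency-of-restrictions fact that the paper leaves implicit. What the paper's route offers is a proof that parallels the finite-order argument directly, but as written it duplicates work that Theorem~\ref{theorem6} has already done. The one load-bearing requirement of your argument --- that Theorem~\ref{theorem6} be available at every finite order under the smoothness hypothesis --- is exactly what you verify at the outset, so there is no gap.
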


\begin{proof}[Sketch of proof]
  When $i,j \in \{0,1\}$, by Theorem~\ref{theorem4}, the formulas hold. Then the result is proved by induction and in a similar way to Theorem~\ref{theorem4}.
\end{proof}

\section{Computational framework}
\label{sec:comp}
In this section, we describe the computational framework for the AGRF prediction. Noiseless and noisy scenarios are considered. Since we use univariate observable in the theoretical framework, we illustrate the same scenario here for consistency and conciseness. Formulations for the multi-variate cases can be deduced based on our results and the gradient-enhanced Kriging/Cokriging methods (see, e.g.,~\cite{ulaganathan_performance_2015, laurent_overview_2019, deng_multifidelity_2020}).

\subsection{Prediction using noiseless data}
As in the conventional GP regression, we aim to condition the joint Gaussian prior distribution on the observations, as such to provide a posterior joint Gaussian distribution. In our framework, the observations include the collected data of the observable and its derivatives of different orders. Suppose we are given a finite collection of real-valued data pairs:
\begin{equation}
  \begin{aligned}
    \text{Observable:}                    & \quad(x_{01}, y_{01}) \; (x_{02}, y_{02}) \; \cdots \; (x_{0p_0}, y_{0p_0}) \\
    \text{First order derivative:}        & \quad(x_{11}, y_{11}) \; (x_{12}, y_{12}) \; \cdots \; (x_{1p_1}, y_{1p_1}) \\
    \text{Second order derivative:}       & \quad(x_{21}, y_{21}) \; (x_{22}, y_{22}) \; \cdots \; (x_{2p_2}, y_{2p_2}) \\
    & \quad\vdots \\
    n^\text{th} \text{ order derivative:} & \quad(x_{n1}, y_{n1}) \; (x_{n2}, y_{n2}) \; \cdots \; (x_{np_n}, y_{np_n})
  \end{aligned}
\end{equation}
with $n \ge 0$ and $p_0, p_1, p_2, \cdots, p_n \ge 0$. Here, $x_{ij}$ are locations and $y_{ij}$ are the data collected at this location. Of note, we consider a general case, and it is not necessary that $x_{ij}=x_{(i+1)j}$. In other words, it is possible that the observable and its derivatives are sampled at different locations. We assume that a mean function and a covariance function are given for the Gaussian random field $f$ that describes the observable:
\begin{equation}
  m:\mathbb{R} \to \mathbb{R}
\end{equation}
and
\begin{equation}
  k: \mathbb{R} \times \mathbb{R} \to \mathbb{R}
\end{equation}
such that $m$ is $n$ times differentiable and $k$ is symmetric, nonnegative definite, and $2n$ times continuously differentiable. By Theorem~\ref{theorem1} and Theorem~\ref{theorem5}, there exists a Gaussian random field $\tilde{f}^n$ on $\mathbb{R} \times \{0,1,\cdots,n\}$ whose mean function and covariance function are given by Theorem~\ref{theorem6}. We use the augmented Gaussian random field $\tilde{f}^n$ to model the data such that
\begin{equation}
  \tilde{f}^n(\omega, x_{ij}, i) = y_{ij} \text{ for } i\in \{0,1,\cdots,n\}
  \text{ and } j\in \{1,\cdots,p_i\}.
\end{equation}
The prediction of the $q^\text{th}$ ($0\le q\le n$) order derivative at any $x_{q*}\in \mathbb{R}$ is the posterior mean of the random variable $\tilde{f}^n(\omega, x_{q*}, q)$ (denoted by $y_{q*}$), and the uncertainty in the prediction can be described by the confidence interval based on the posterior variance (or standard deviation).

We introduce the notations:
\begin{equation}
  \begin{aligned}
    \bm x_0 &= \left[x_{01}, x_{02}, \cdots, x_{0p_0}\right]^\text{T} \\
    \bm x_1 &= \left[x_{11}, x_{12}, \cdots, x_{1p_1}\right]^\text{T} \\
            &\quad\vdots \\
    \bm x_n &= \left[x_{n1}, x_{n2}, \cdots, x_{np_n}\right]^\text{T}
  \end{aligned}
\end{equation}
and
\begin{equation}
  \begin{aligned}
    \bm y_0 &= \left[y_{01}, y_{02}, \cdots, y_{0p_0}\right]^\text{T} \\
    \bm y_1 &= \left[y_{11}, y_{12}, \cdots, y_{1p_1}\right]^\text{T} \\
            &\quad\vdots \\
    \bm y_n &= \left[y_{n1}, y_{n2}, \cdots, y_{np_n}\right]^\text{T}.
  \end{aligned}
\end{equation}
Since $\tilde{f}^n$ is a Gaussian random field, we have the following multivariate Gaussian distribution:
\begin{equation}\label{eq:prior}
  \begin{bmatrix}
    y_{q*} \\
    \bm y_0 \\
    \vdots \\
    \bm y_n
  \end{bmatrix}
  \thicksim
  \mathcal{N}
  \left(\begin{bmatrix}
    M_* \\
    M_0 \\
    \vdots \\
    M_n
  \end{bmatrix},
  \begin{bmatrix}
    K_{**} & K_{*0} & \cdots & K_{*n} \\
    K_{0*} & K_{00} & \cdots & K_{0n} \\
    \vdots & \vdots & \ddots & \vdots \\
    K_{n*} & K_{n0} & \cdots & K_{nn}
  \end{bmatrix}
  \right),
\end{equation}
where
\begin{equation}
  \begin{aligned}
    M_* &= \tilde{m}^n(x_{q*}, q) \\
    M_i &= \left[\tilde{m}^n(x_{i1}, i), \cdots, \tilde{m}^n(x_{ip_i}, i)\right]^\text{T}
  \end{aligned}
\end{equation}
and
\begin{equation}
  \begin{aligned}
    K_{**} &= \tilde{k}^n(x_{q*},q,x_{q*},q) \\
    K_{*i'} &= \left[\tilde{k}^n(x_{q*},q,x_{i'1},i'), \cdots, \tilde{k}^n(x_{q*},q,x_{i'p_{i'}},i')\right] \\
    K_{i*} &= \left[\tilde{k}^n(x_{i1},i,x_{q*},q), \cdots, \tilde{k}^n(x_{ip_i},i,x_{q*},q)\right]^\text{T} \\
    K_{ii'} &= \begin{bmatrix}
      \tilde{k}^n(x_{i1},i,x_{i'1},i') & \cdots & \tilde{k}^n(x_{i1},i,x_{i'p_{i'}},i') \\
      \vdots & \ddots & \vdots \\
      \tilde{k}^n(x_{ip_i},i,x_{i'1},i') & \cdots & \tilde{k}^n(x_{ip_i},i,x_{i'p_{i'}},i')
    \end{bmatrix}
  \end{aligned}
\end{equation}
for $i,i'\in \{0,1,\cdots,n\}$. Here, $\tilde{m}^n$ and $\tilde{k}^n$ are calculated according to Theorem~\ref{theorem6}. Then, the posterior distribution of $y_{q*}$ is also a Gaussian distribution:
\begin{equation}\label{eq:posterior}
  \left(y_{q*}\; \big |\;\bm y_0, \cdots, \bm y_n \right) \thicksim \mathcal{N}(\mu, \sigma^2),
\end{equation}
where
\begin{equation}
  \begin{aligned}
    \mu &= M_* + \left[K_{*0}, \cdots, K_{*n}\right] \begin{bmatrix}
      K_{00} & \cdots & K_{0n} \\
      \vdots & \ddots & \vdots \\
      K_{n0} & \cdots & K_{nn}
    \end{bmatrix}^{-1}
    \begin{bmatrix}
      \bm y_0 - M_0 \\
      \vdots \\
      \bm y_n - M_n
    \end{bmatrix} \\
    \sigma^2 &= K_{**} - \left[K_{*0}, \cdots, K_{*n}\right]
    \begin{bmatrix}
      K_{00} & \cdots & K_{0n} \\
      \vdots & \ddots & \vdots \\
      K_{n0} & \cdots & K_{nn}
    \end{bmatrix}^{-1}
    \begin{bmatrix}
      K_{0*} \\
      \vdots \\
      K_{n*}
    \end{bmatrix}.
  \end{aligned}
\end{equation}

In practice, we usually assume the form of the mean function $m$ and the covariance function $k$, which involve hyperparameters. We denote these parameters by a vector $\bm \theta$. For instance, in the widely used squared exponential covariance function $k(x,x')=a^2\exp(-(x-x')^2 / (2 l^2))$, $a$ and $l$ are hyperparameters. Similarly, the mean function $m$ may include hyperparameters as well. For example, if $m$ is a polynomial as in the universal Kriging, the coefficients of the polynomial are hyperparameters. Similar to the standard GP method, the AGRF method identifies the hyperparameters via maximizing the following log-likelihood:
\begin{multline}
  \log p\left(\bm y_0, \cdots, \bm y_n \;\big |\; \bm x_0, \cdots, \bm x_n, \bm \theta\right) = -\frac{p_0+\cdots+p_n}{2}\log(2\pi) - \frac{1}{2} \log\left(\det\left(\begin{bmatrix}
    K_{00} & \cdots & K_{0n} \\
    \vdots & \ddots & \vdots \\
    K_{n0} & \cdots & K_{nn}
  \end{bmatrix}\right)\right) \\
  -\;\frac{1}{2} \begin{bmatrix}
    \bm y_0 - M_0 \\
    \vdots \\
    \bm y_n - M_n
  \end{bmatrix}^\text{T} \begin{bmatrix}
    K_{00} & \cdots & K_{0n} \\
    \vdots & \ddots & \vdots \\
    K_{n0} & \cdots & K_{nn}
  \end{bmatrix}^{-1} \begin{bmatrix}
    \bm y_0 - M_0 \\
    \vdots \\
    \bm y_n - M_n
  \end{bmatrix}.
\end{multline}
After identifying $\bm \theta$, we obtain the posterior distribution in Eq.~\eqref{eq:posterior}.

\subsection{Prediction using noisy data}
\begin{figure}[t]
  \centering
  \includegraphics[width=0.99\linewidth]{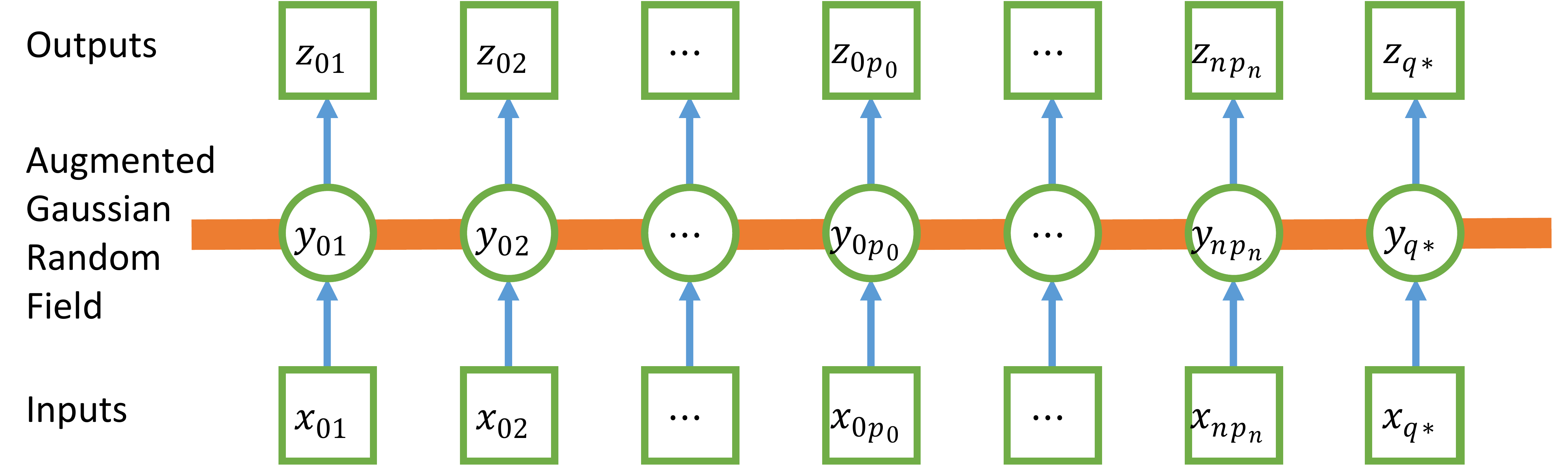}
  \caption{\textbf{Graphical illustration of augmented Gaussian random field prediction with measurement noise.} There are three layers: input layer, hidden layer, and output layer. The hidden layer is dominated by augmented Gaussian random field. The observable and its derivatives of different orders are integrated into the same field to make predictions.}
  \label{fig:diagram}
\end{figure}

Suppose we are given a finite collection of real-valued noisy data pairs:
\begin{equation}
  \begin{aligned}
    \text{Observable:}                    & \quad(x_{01}, z_{01}) \; (x_{02}, z_{02}) \; \cdots \; (x_{0p_0}, z_{0p_0}) \\
    \text{First order derivative:}        & \quad(x_{11}, z_{11}) \; (x_{12}, z_{12}) \; \cdots \; (x_{1p_1}, z_{1p_1}) \\
    \text{Second order derivative:}       & \quad(x_{21}, z_{21}) \; (x_{22}, z_{22}) \; \cdots \; (x_{2p_2}, z_{2p_2}) \\
    & \quad\vdots \\
    n^\text{th} \text{ order derivative:} & \quad(x_{n1}, z_{n1}) \; (x_{n2}, z_{n2}) \; \cdots \; (x_{np_n}, z_{np_n})
  \end{aligned}
\end{equation}
with $n \ge 0$ and $p_0, p_1, p_2, \cdots, p_n \ge 0$. Here $x_{ij}$ are locations and $z_{ij}$ are the noisy data collected at this location. Assume that $z_{ij}$ can be decomposed into the sum of noiseless part $y_{ij}$ and noise $\epsilon_{ij}$, where $y_{ij}$ are governed by AGRF and $\epsilon_{ij}$ are Gaussian random variables independent of each other and independent of all the $y_{ij}$:
\begin{equation}\label{eq:decomp}
  \begin{aligned}
    z_{ij} &= y_{ij} + \epsilon_{ij} \\
    \epsilon_{ij} &\sim \mathcal{N}(0,\delta_i^2),
  \end{aligned}
\end{equation}
where $\delta_i$ is the noise intensity of the $i^\text{th} \text{ order derivative}$. We use different noise intensities for different order derivatives because different order derivatives might have different magnitudes. Then, we have
\begin{equation}\label{eq:var}
  \begin{aligned}
    \text{Var}[z_{ij}] &= \text{Var}[y_{ij}] + \delta_i^2 \\
    \text{Cov}[z_{ij},z_{i'j'}] &= \text{Cov}[y_{ij},y_{i'j'}] \text{ for } (i,j)\neq(i',j').
  \end{aligned}
\end{equation}
See Figure~\ref{fig:diagram} for the graphical illustration. Note that our framework allows $z_{ij} \neq z_{ij'}$ while $x_{ij} = x_{ij'}$ for $j \neq j'$. In other words, two measurements of the same order derivative at the same location may have different values due to measurement noise.

We introduce the notations:
\begin{equation}
  \begin{aligned}
    \bm z_0 &= \left[z_{01}, z_{02}, \cdots, z_{0p_0}\right]^\text{T} \\
    \bm z_1 &= \left[z_{11}, z_{12}, \cdots, z_{1p_1}\right]^\text{T} \\
            &\quad\vdots \\
    \bm z_n &= \left[z_{n1}, z_{n2}, \cdots, z_{np_n}\right]^\text{T}.
  \end{aligned}
\end{equation}
By Eqs.~\eqref{eq:prior}, \eqref{eq:decomp}, and \eqref{eq:var}, we have the multivariate Gaussian distribution:
\begin{equation}
  \begin{bmatrix}
    z_{q*} \\
    \bm z_0 \\
    \vdots \\
    \bm z_n
  \end{bmatrix}
  \thicksim
  \mathcal{N}
  \left(\begin{bmatrix}
    M_* \\
    M_0 \\
    \vdots \\
    M_n
  \end{bmatrix},
  \begin{bmatrix}
    K_{**} + \delta_q^2 & K_{*0} & \cdots & K_{*n} \\
    K_{0*} & K_{00} + \delta_0^2 I_{p_0} & \cdots & K_{0n} \\
    \vdots & \vdots & \ddots & \vdots \\
    K_{n*} & K_{n0} & \cdots & K_{nn} + \delta_n^2 I_{p_n}
  \end{bmatrix}
  \right),
\end{equation}
where $I_j$ represents the $j\times j$ identity matrix, and $M_*, M_i, K_{**}, K_{*i'}, K_{i*}, K_{ii'}$ are defined the same as in Eq.~\eqref{eq:prior}. Then, the posterior distribution of $z_{q*}$ is also a Gaussian distribution:
\begin{equation}
  \left(z_{q*}\; \big |\;\bm z_0, \cdots, \bm z_n \right) \thicksim \mathcal{N}(\mu', \sigma'^2),
\end{equation}
where
\begin{equation}
  \begin{aligned}
    \mu' &= M_* + \left[K_{*0}, \cdots, K_{*n}\right] \begin{bmatrix}
      K_{00} + \delta_0^2 I_{p_0} & \cdots & K_{0n} \\
      \vdots & \ddots & \vdots \\
      K_{n0} & \cdots & K_{nn} + \delta_n^2 I_{p_n}
    \end{bmatrix}^{-1}
    \begin{bmatrix}
      \bm z_0 - M_0 \\
      \vdots \\
      \bm z_n - M_n
    \end{bmatrix} \\
    \sigma'^2 &= K_{**} + \delta_q^2 - \left[K_{*0}, \cdots, K_{*n}\right]
    \begin{bmatrix}
      K_{00} + \delta_0^2 I_{p_0} & \cdots & K_{0n} \\
      \vdots & \ddots & \vdots \\
      K_{n0} & \cdots & K_{nn} + \delta_n^2 I_{p_n}
    \end{bmatrix}^{-1}
    \begin{bmatrix}
      K_{0*} \\
      \vdots \\
      K_{n*}
    \end{bmatrix}. \\
  \end{aligned}
\end{equation}
Similar to the noiseless scenario, the hyperparameters can be identified via maximizing the following log-likelihood:
\begin{multline}
  \log p\left(\bm z_0, \cdots, \bm z_n \;\big |\; \bm x_0, \cdots, \bm x_n, \bm \theta, \delta_0, \cdots, \delta_n\right) = -\frac{p_0+\cdots+p_n}{2}\log(2\pi) \\
  -\;\frac{1}{2} \log\left(\det\left(\begin{bmatrix}
      K_{00} + \delta_0^2 I_{p_0} & \cdots & K_{0n} \\
      \vdots & \ddots & \vdots \\
      K_{n0} & \cdots & K_{nn} + \delta_n^2 I_{p_n}
    \end{bmatrix}\right)\right) \\
  -\;\frac{1}{2} \begin{bmatrix}
    \bm z_0 - M_0 \\
    \vdots \\
    \bm z_n - M_n
  \end{bmatrix}^\text{T} \begin{bmatrix}
      K_{00} + \delta_0^2 I_{p_0} & \cdots & K_{0n} \\
      \vdots & \ddots & \vdots \\
      K_{n0} & \cdots & K_{nn} + \delta_n^2 I_{p_n}
    \end{bmatrix}^{-1} \begin{bmatrix}
    \bm z_0 - M_0 \\
    \vdots \\
    \bm z_n - M_n
  \end{bmatrix}.
\end{multline}
The difference is that we also identify the noise intensities $\delta_0,\cdots,\delta_n$ in the optimization.

\section{Numerical examples}
\label{sec:results}
In this section, we present four examples to illustrate the AGRF framework. In all examples, we use the zero mean function
\begin{equation}
  m(x) = 0
\end{equation}
and the squared exponential covariance function
\begin{equation}
  k(x,x') = a^2 \exp\left(\frac{-(x-x')^2}{2 l^2}\right)
\end{equation}
to construct the GRF representing the observable. The hyperparameters to identify are $\bm \theta = (a, l)$. We use the following relative $L_2$ error (RLE) to evaluate the accuracy of the prediction by different GRF-based methods:
\begin{equation}
    \text{RLE} = \frac{\Vert u-\tilde u\Vert_2}{\Vert u\Vert_2},
\end{equation}
where $u$ is the exact function and $\tilde u$ is the approximation.

\subsection{Composite function (noiseless)}
\begin{figure}[p]
  \centering
  \includegraphics[width=0.99\textwidth]{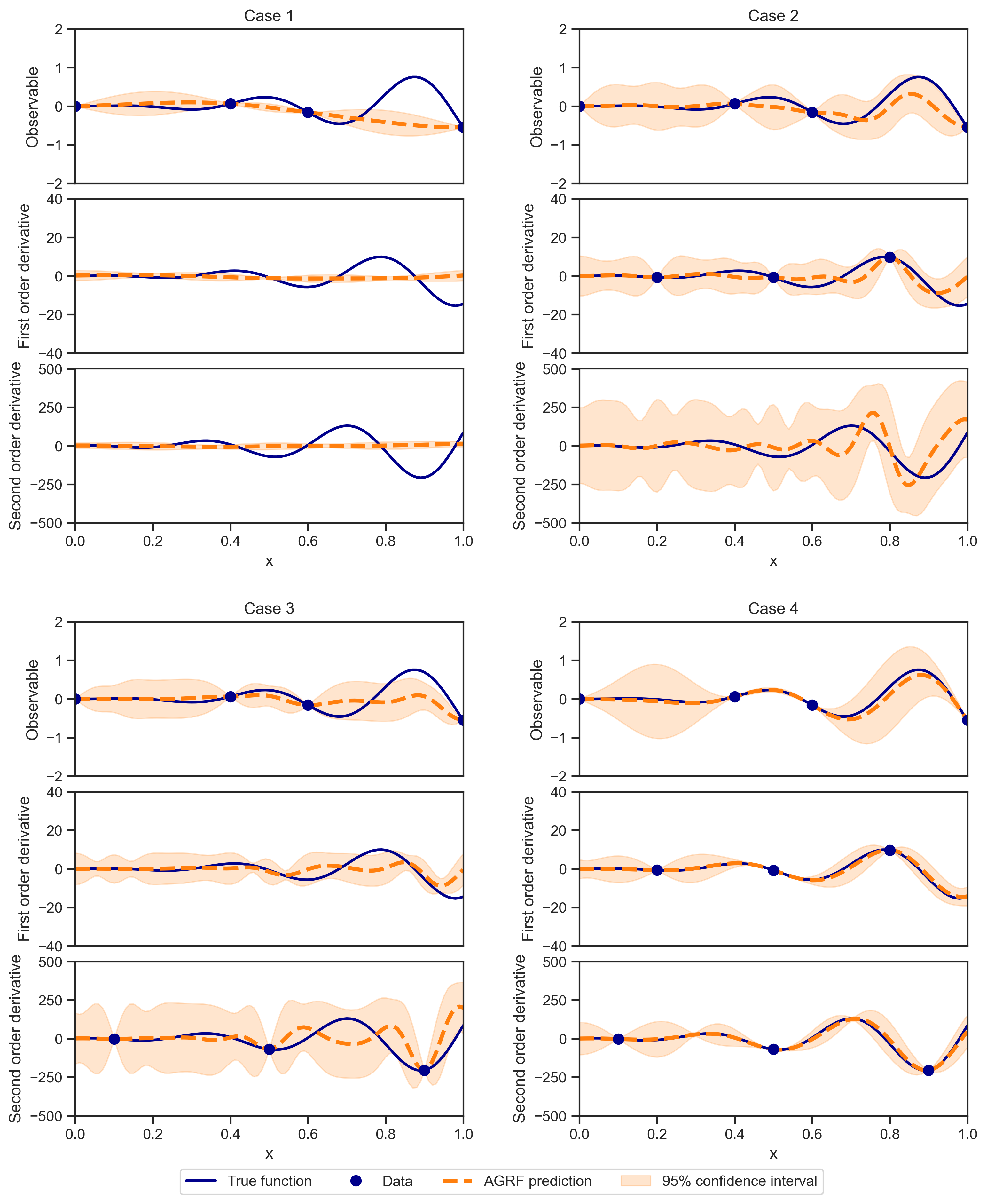}
  \caption{\textbf{[Composite function (noiseless)] Prediction of the observable, first order derivative, and second order derivative by AGRF.} Case~1: the data include the observable only. Case~2: the data include the observable and first order derivative. Case~3: the data include the observable and second order derivative. Case~4: the data include the observable, first order derivative, and second order derivative. AGRF is able to integrate the observable and derivatives of any order, regardless of the location where they are collected. The AGRF prediction improves when more information is available.}
  \label{fig:exp1_prediction}
\end{figure}

\begin{figure}[t]
  \centering
  \includegraphics[width=0.495\textwidth]{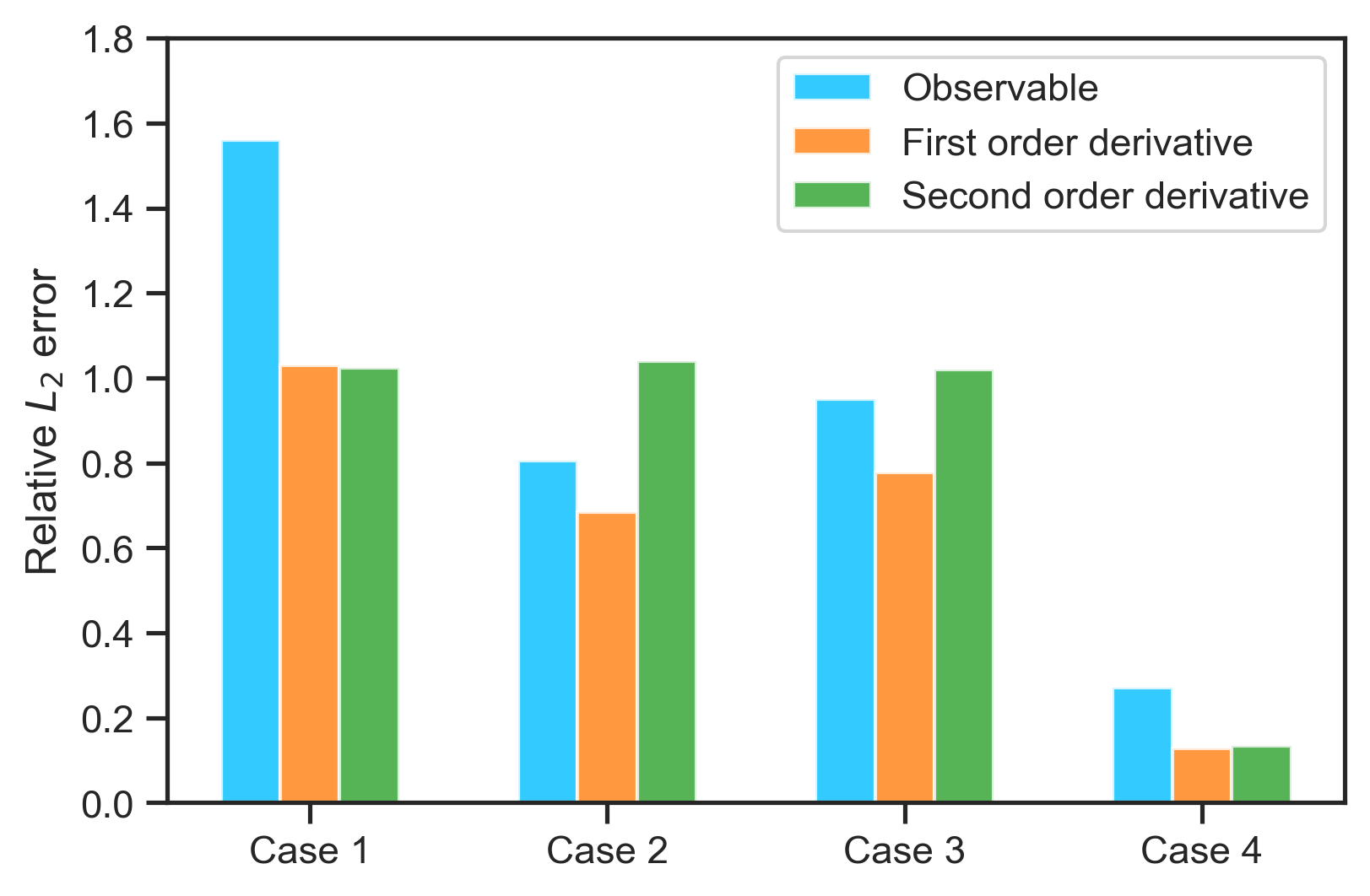}
  \caption{\textbf{[Composite function (noiseless)] Comparison of the prediction accuracy of AGRF in different cases.} See Figure~\ref{fig:exp1_prediction} for more explanations.}
  \label{fig:exp1_RLE}
\end{figure}

Consider the following function:
\begin{equation}
  y(x) = x^2 \sin (16x-6)
\end{equation}
on $x\in [0,1]$. The available data may include the observable at $x\in \{0.0, 0.4, 0.6, 1.0\}$, its first order derivative at $x\in \{0.2, 0.5, 0.8\}$, and its second order derivative at $x\in \{0.1, 0.5, 0.9\}$. We consider the following cases:
\begin{description}
  \item[Case~1:] the data include the observable only.
  \item[Case~2:] the data include the observable and first order derivative.
  \item[Case~3:] the data include the observable and second order derivative.
  \item[Case~4:] the data include the observable, first order derivative, and second order derivative.
\end{description}
See Figure~\ref{fig:exp1_prediction} for the prediction by AGRF. The observable, first order derivative, and second order derivative are predicted in each case. In Case~1, the AGRF model for the observable is the same as the conventional GP model, because the former is a generalization of the latter. However, the conventional GP regression does not provide the prediction of derivatives. In Case~2, the AGRF prediction matches the true function and its derivatives better than Case~1. This is because the derivative information is incorporated in the model. Similarly, in Case~3, the prediction is enhanced by incorporating the second order derivative, and thus is better than Case~1. In Case~4, AGRF has the best prediction among all four cases. It is not surprising that by using all the available information we can construct the most accurate surrogate model. See Figure~\ref{fig:exp1_RLE} for a quantitative comparison of the prediction accuracy.

In this example, we can see that AGRF is able to integrate the observable and derivatives of any order, regardless of the location where they are collected. As one might expect, the AGRF prediction improves when more information is available.

\subsection{Damped harmonic oscillator (noiseless)}
\begin{figure}[p]
    \centering
    \includegraphics[width=0.99\textwidth]{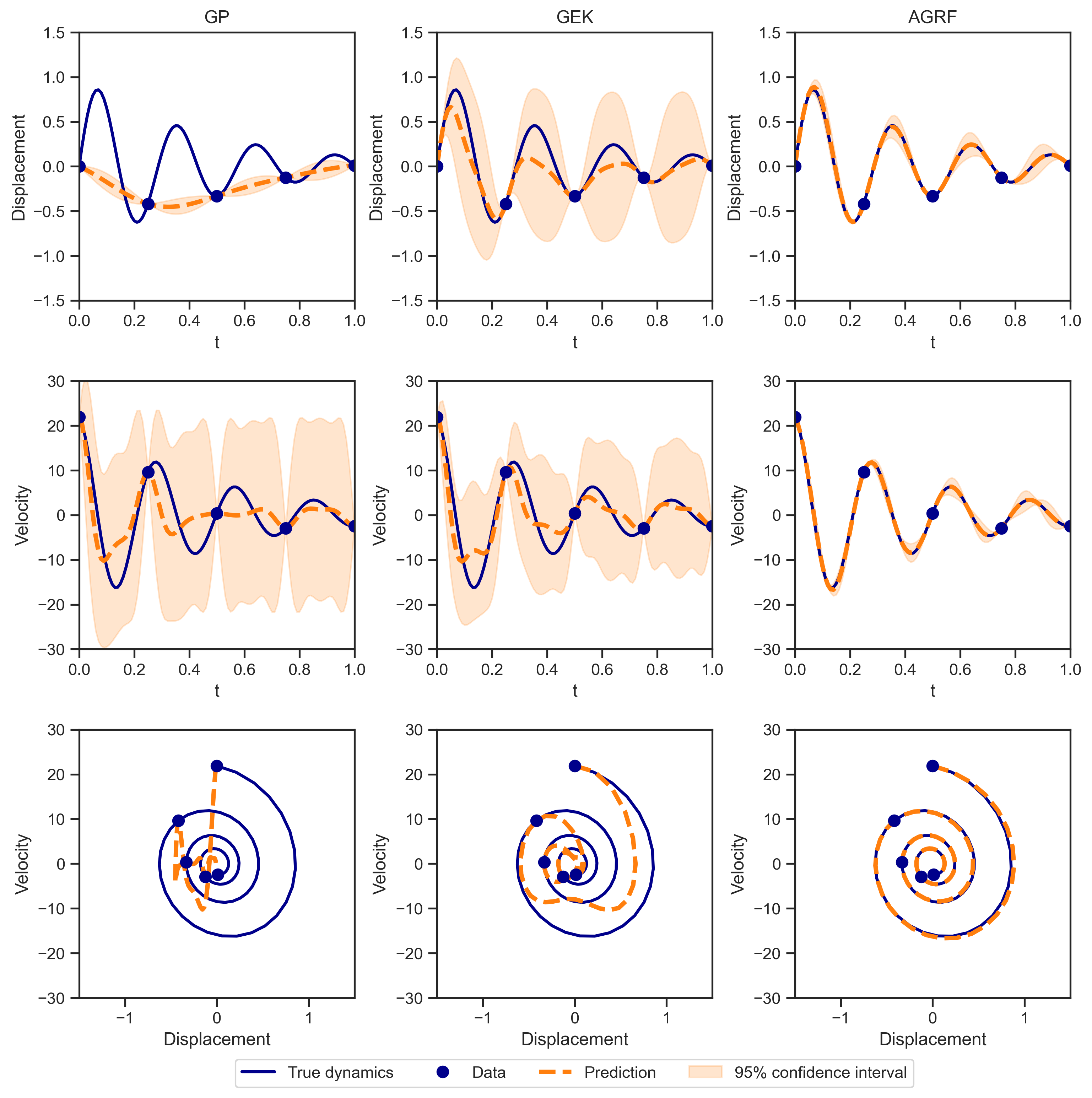}
    \caption{\textbf{[Damped harmonic oscillator (noiseless)] Prediction of the displacement, velocity, and phase-space diagram by different methods.} GP: the data include the observable and first order derivative; the observable data are used to predict the displacement and the first order derivative data are used to predict the velocity, respectively. GEK: the data include the observable and first order derivative; all the data are used jointly in the same random field to predict the displacement and velocity at the same time. AGRF: the data include the observable, first order derivative, and second order derivative; all the data are used together in the same random field to predict the displacement and velocity at the same time. GEK produces better prediction than GP, while AGRF predicts more accurately than GEK. By using all the available information together in the same random field, we can construct the most accurate surrogate model.}
    \label{fig:exp2_prediction}
\end{figure}

\begin{figure}[t]
  \centering
  \includegraphics[width=0.495\textwidth]{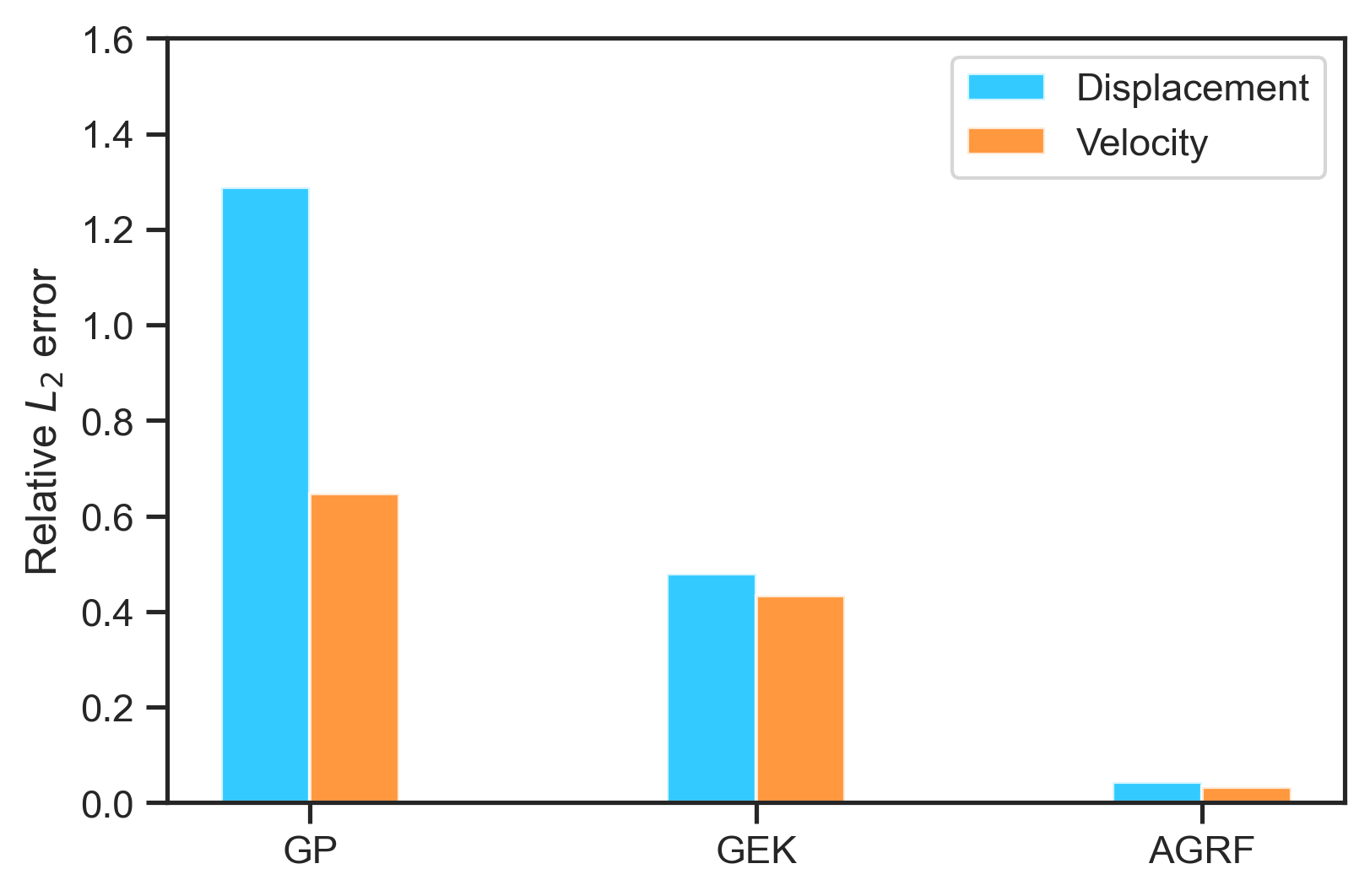}
  \caption{\textbf{[Damped harmonic oscillator (noiseless)] Comparison of the prediction accuracy by different methods.} See Figure~\ref{fig:exp2_prediction} for more explanations.}
  \label{fig:exp2_RLE}
\end{figure}

Consider the following damped harmonic oscillator:
\begin{equation}
  \left\{
  \begin{aligned}
    F &= -ky-cy' \\
    F &= my''
  \end{aligned}
  \right.
\end{equation}
where $y$ is the displacement, $y'$ is the velocity, $y''$ is the acceleration, $-ky$ is the restoring force, and $-cy'$ is the frictional force. This system can be simplified to:
\begin{equation}
  y'' + 2\zeta \omega_0 y' + \omega_0^2 y = 0,
\end{equation}
where $\omega_0 = \sqrt{k/m}$ is the undamped angular frequency and $\zeta = c/\sqrt{4mk}$ is the damping ratio. When $\zeta < 1$, it has the solution:
\begin{equation}
  y(t) = A \exp(-\zeta \omega_0 t) \sin\left(\sqrt{1-\zeta^2}\omega_0 t + \phi\right),
\end{equation}
where the amplitude $A$ and the phase $\phi$ determine the behavior needed to match the initial conditions. Now, consider a specific example:
\begin{equation}
  y(t) = \exp(-0.1\cdot 22\cdot t) \sin\left(\sqrt{1-0.1^2}\cdot 22\cdot t\right)
\end{equation}
on $t\in [0,1]$. The available data may include the observable, its first order derivative, and its second order derivative at $x\in \{0.0, 0.25, 0.5, 0.75, 1.0\}$. We use conventional Gaussian process regression (GP, the same as AGRF using the data of observable only), gradient-enhanced Kriging~\cite{laurent_overview_2019} (GEK, the same as AGRF using the data of observable and first order derivative at the same location), or AGRF (using the data of observable and all derivatives available) to construct the surrogate model:
\begin{description}
  \item[GP:] the data include the observable and first order derivative; the observable data are used to predict the displacement and the first order derivative data are used to predict the velocity, respectively.
  \item[GEK:] the data include the observable and first order derivative; all the data are used jointly in the same random field to predict the displacement and velocity at the same time.
  \item[AGRF:] the data include the observable, first order derivative, and second order derivative; all the data are used together in the same random field to predict the displacement and velocity at the same time.
\end{description}
See Figure~\ref{fig:exp2_prediction} for the prediction by different methods. The displacement, velocity, and phase-space diagram are predicted by each method. GEK produces better prediction than GP, while AGRF predicts more accurately than GEK. By using all the available information together in the same random field, we can construct the most accurate surrogate model. See Figure~\ref{fig:exp2_RLE} for a quantitative comparison of the prediction accuracy.

\subsection{Korteweg-De Vries equation (noisy)}
\begin{figure}[p]
    \centering
    \includegraphics[width=0.99\textwidth]{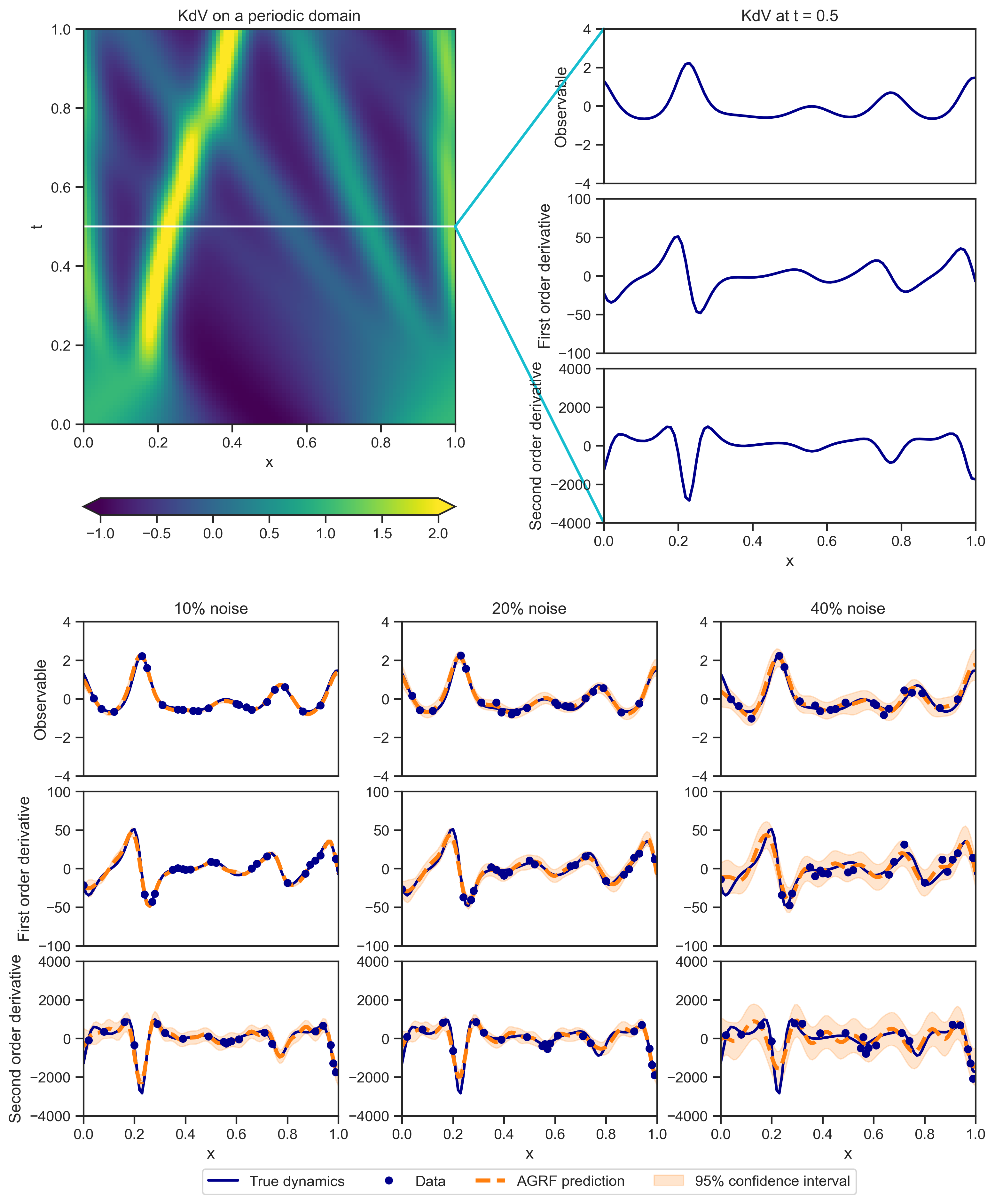}
    \caption{\textbf{[Korteweg-De Vries equation (noisy)] Top: the solution at $t=0.5$ is studied. Bottom: prediction of the observable, first order derivative, and second order derivative by AGRF under different levels of noise.} AGRF has good performance even when the noise is as high as 40\%. As one might expect, the AGRF prediction is better when the noise is lower.}
    \label{fig:exp3_prediction}
\end{figure}

\begin{figure}[t]
  \centering
  \includegraphics[width=0.495\textwidth]{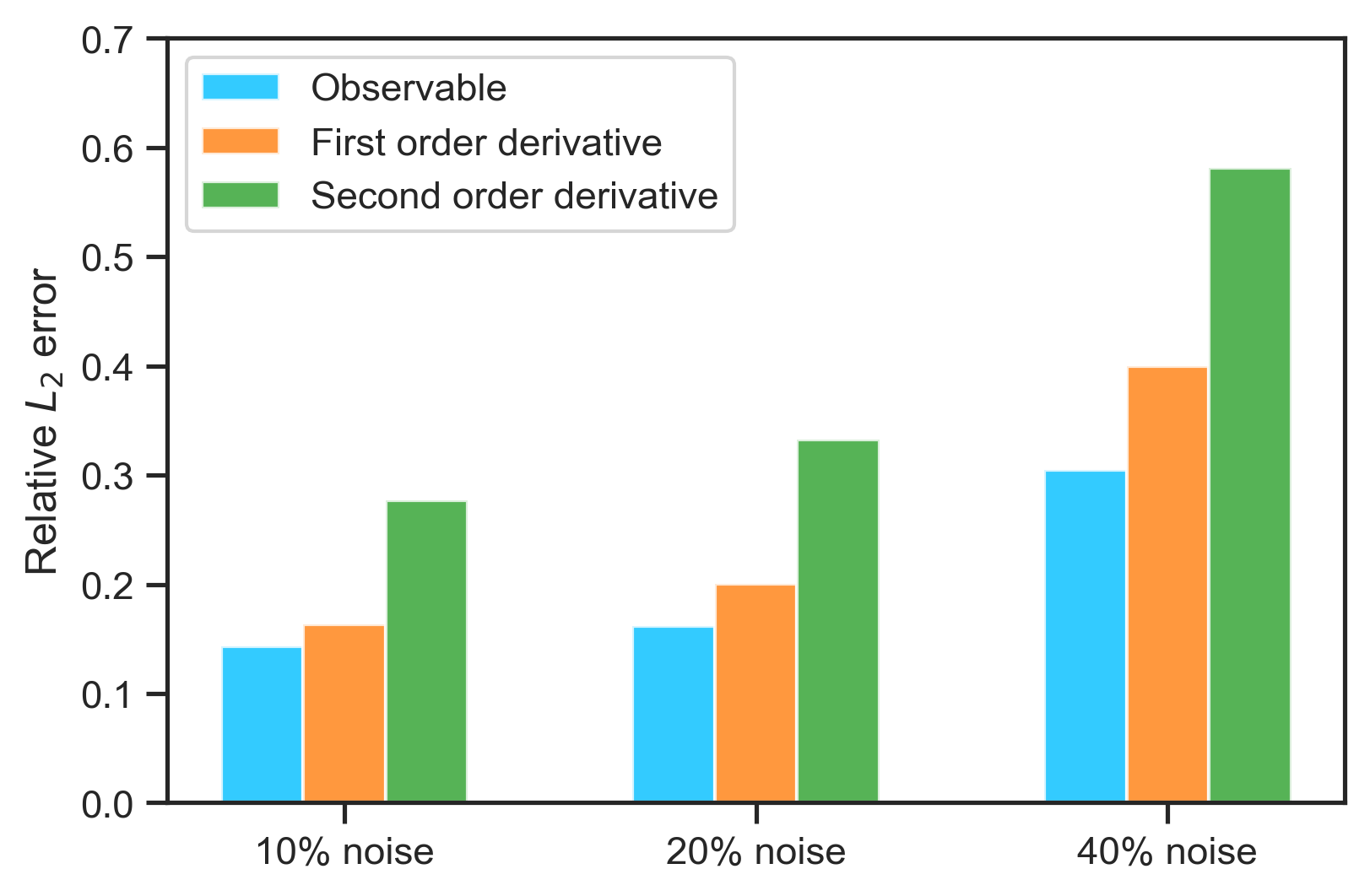}
  \caption{\textbf{[Korteweg-De Vries equation (noisy)] Comparison of the prediction accuracy under different levels of noise.} See Figure~\ref{fig:exp3_prediction} for more explanations.}
  \label{fig:exp3_RLE}
\end{figure}

Consider the following Korteweg-De Vries (KdV) equation:
\begin{equation}
    \frac{\partial u}{\partial t} + u\frac{\partial u}{\partial x} + 0.0005\frac{\partial^3 u}{\partial x^3} = 0
\end{equation}
on $x \in [0,1]$ and $t \in [0,\infty)$ with an initial condition $u(x,0)=\cos (2\pi x)$ and periodic boundary condition. The solution at $t=0.5$ is studied. We try to approximate the observable, its first order derivative, and its second order derivative. The data are randomly chosen from the observable, first order derivative, and second order derivative; 20 from each, 60 in total. Then, noise is added to the data. We study the performance of AGRF under different levels of noise: 10\%~noise, 20\%~noise, and 40\%~noise. See Figure~\ref{fig:exp3_prediction} for the prediction. As one might expect, the AGRF prediction is better when the noise is lower. See Figure~\ref{fig:exp3_RLE} for a quantitative comparison of the prediction accuracy.

\subsection{Burgers' equation (noisy)}
\begin{figure}[p]
    \centering
    \includegraphics[width=0.99\textwidth]{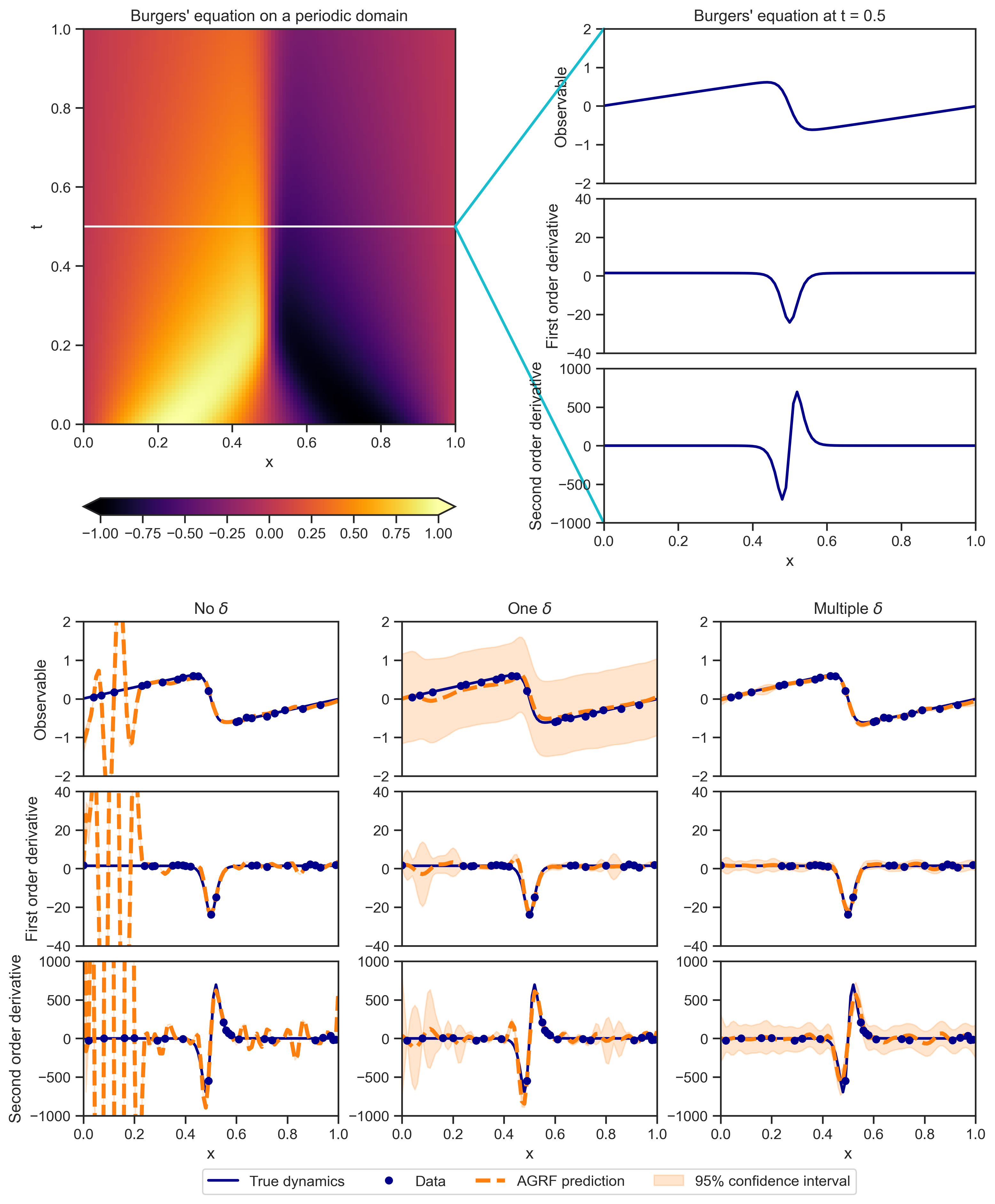}
    \caption{\textbf{[Burgers' equation (noisy)] Top: the solution at $t=0.5$ is studied. Bottom: prediction of the observable, first order derivative, and second order derivative by different AGRF calibrations.} No~$\delta$: noiseless formulation is used despite the presence of noise in the data, i.e., $\delta_0 = \delta_1 = \delta_2 = 0$ in Eqn.~\eqref{eq:decomp}. One~$\delta$: the same noise intensity is used for different order derivatives, i.e., $\delta_0 = \delta_1 = \delta_2$ in Eqn.~\eqref{eq:decomp}. Multiple~$\delta$: different noise intensities are used for different order derivatives, i.e., the same as Eqn.~\eqref{eq:decomp}. When the noiseless formulation is used despite the presence of noise in the data, overfitting is an issue. When the same noise intensity is used for different order derivatives, the uncertainty in the prediction is incompatible with the data since different order derivatives have different scales. When the formulation is exactly the same as Eqn.~\eqref{eq:decomp}, AGRF has the best performance.}
    \label{fig:exp4_prediction}
\end{figure}

\begin{figure}[t]
  \centering
  \includegraphics[width=0.495\textwidth]{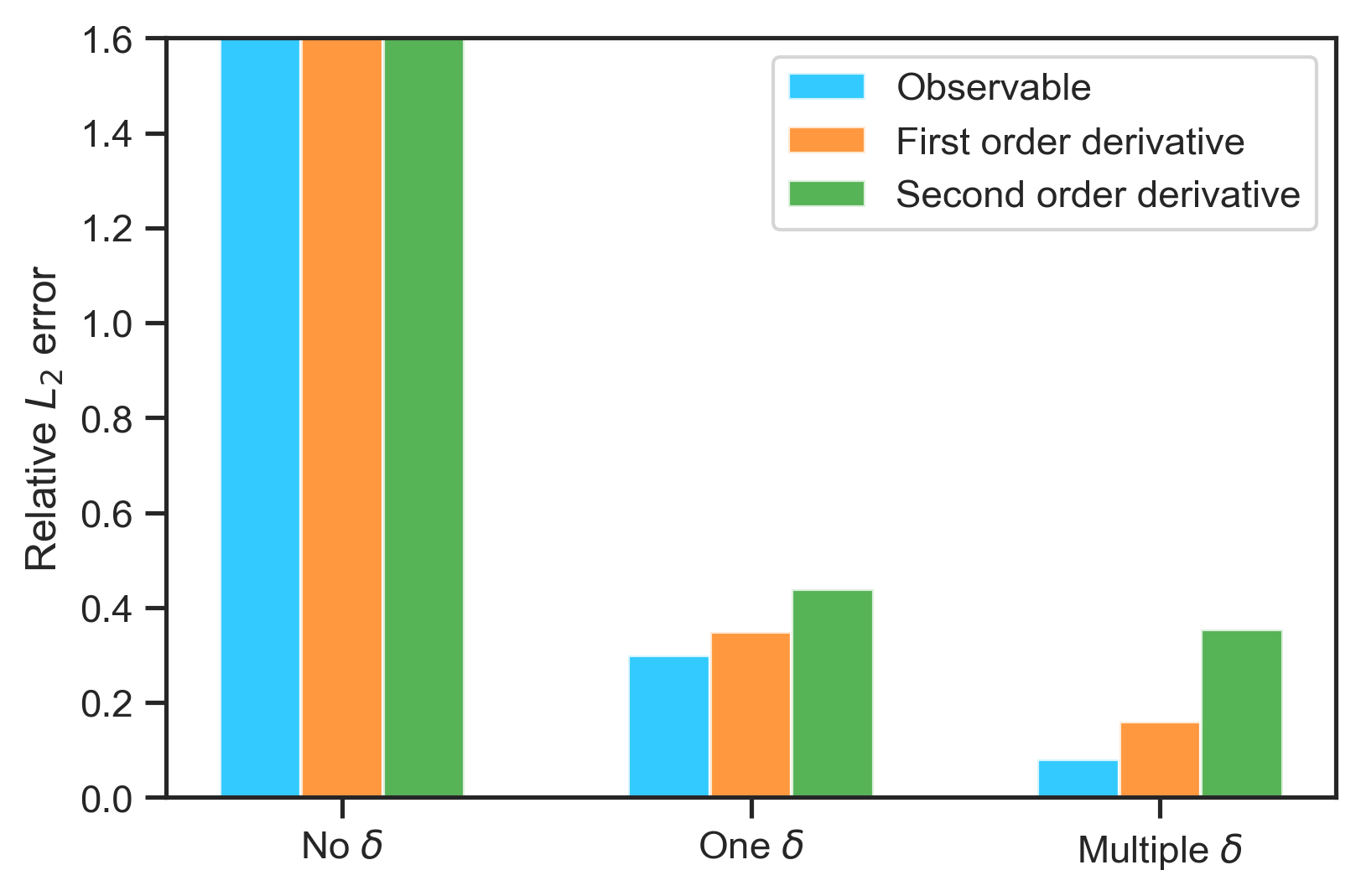}
  \caption{\textbf{[Burgers' equation (noisy)] Comparison of the prediction accuracy by different AGRF calibrations.} See Figure~\ref{fig:exp4_prediction} for more explanations. The relative $L_2$ errors in the case ``no $\delta$'' are greater than $1.6$ and out of bound.}
  \label{fig:exp4_RLE}
\end{figure}

Consider the following Burgers' equation:
\begin{equation}
    \frac{\partial u}{\partial t} + u\frac{\partial u}{\partial x} - 0.01\frac{\partial^2 u}{\partial x^2} = 0
\end{equation}
on $x \in [0,1]$ and $t \in [0,\infty)$ with an initial condition $u(x,0)=\sin (2\pi x)$ and periodic boundary condition. The solution at $t=0.5$ is studied. We try to approximate the observable, its first order derivative, and its second order derivative. The data are randomly chosen from the observable, first order derivative, and second order derivative; 20 from each, 60 in total. Then, 10\%~noise is added to the data. We study the performance of AGRF when it is calibrated in different ways:
\begin{description}
  \item[No~$\bm \delta$:] noiseless formulation is used despite the presence of noise in the data, i.e., $\delta_0 = \delta_1 = \delta_2 = 0$ in Eqn.~\eqref{eq:decomp}.
  \item[One~$\bm \delta$:] the same noise intensity is used for different order derivatives, i.e., $\delta_0 = \delta_1 = \delta_2$ in Eqn.~\eqref{eq:decomp}.
  \item[Multiple~$\bm \delta$:] different noise intensities are used for different order derivatives, i.e., the same as Eqn.~\eqref{eq:decomp}.
\end{description}
See Figure~\ref{fig:exp4_prediction} for the prediction by different calibrations. When the noiseless formulation is used despite the presence of noise in the data, overfitting is an issue. When the same noise intensity is used for different order derivatives, the uncertainty in the prediction is incompatible with the data since different order derivatives have different scales. When the formulation is exactly the same as Eqn.~\eqref{eq:decomp}, AGRF has the best performance. See Figure~\ref{fig:exp4_RLE} for a quantitative comparison of the prediction accuracy.

\section{Conclusion}
\label{sec:concl}
In this work, we propose the novel augmented Gaussian random field (AGRF), which is a universal framework incorporating the observable and its derivatives of any order. A comprehensive theoretical foundation is laid. We prove that given the smoothness of the mean and covariance functions, the observable and all its derivatives are governed by a single GRF, which is the aforementioned AGRF. As a corollary, the intuitive yet subtle statement ``the derivative of a Gaussian process remains a Gaussian process'', which is widely used in probabilistic scientific computing and GP-based regression methods, is validated.

Furthermore, the computational method corresponding to the universal AGRF framework is constructed. Both noiseless and noisy scenarios are considered. Formulas of the posterior distributions are deduced in a nice closed form. In the noisy scenario, we use different noise intensities for different order derivatives because different order derivatives might have different magnitudes. We provide four numerical examples to demonstrate that: (1) AGRF is able to integrate the observable and derivatives of any order, regardless of the location where they are collected. The AGRF prediction improves when more information is available. (2) By using all the available information together in the same random field, we can construct the most accurate surrogate model. (3) AGRF has good performance even when the noise is as high as 40\%. The AGRF prediction is better when the noise is lower. (4) When the noiseless formulation is used despite the presence of noise in the data, overfitting is an issue. When the same noise intensity is used for different order derivatives, the uncertainty in the prediction is incompatible with the data since different order derivatives have different scales. When the formulation is exactly the same as described in this paper, AGRF has the best performance.

A significant advantage of our computational method is that the universal AGRF framework provides a natural way to incorporate arbitrary order derivatives and deal with missing data. New research directions and applications may be opened up following this universal framework. Although one-dimensional systems are demonstrated in this paper, our conclusion can be extended to multi-dimensional systems.

Similar to the conventional Gaussian process regression, the bottleneck of AGRF is that the computational formula involves solving linear systems and calculating determinant, both of which have cubic complexity to data size. Approximation methods for Gaussian process regression may be adapted to the AGRF framework to improve its scalability. A review of such methods is presented in \cite{liu_gaussian_2020}.

\section*{Acknowledgements}
SZ was supported by National Science Foundation (NSF) Mathematical Sciences Graduate Internship Program sponsored by NSF Division of Mathematical Sciences. XY was supported by U.S. Department of Energy, Office of Science, Advanced Scientific Computing Research as part of Multifaceted Mathematics for Rare, Extreme Events in Complex Energy and Environment Systems. GL was supported by National Science Foundation (DMS-1555072, DMS-1736364, DMS-2053746, CMMI-1634832, and CMMI-1560834), Brookhaven National Laboratory Subcontract (382247), and U.S. Department of Energy Office of Science Advanced Scientific Computing Research (DE-SC0021142).

\bibliographystyle{ieeetr}
\bibliography{ref}

\begin{thebibliography}{10}

\bibitem{rasmussen_gaussian_2006}
C.~E. Rasmussen and C.~K.~I. Williams, {\em Gaussian processes for machine
  learning}.
\newblock Adaptive computation and machine learning, {MIT} Press, 2006.

\bibitem{abrahamsen_review_1997}
P.~Abrahamsen and N.~regnesentral, {\em A Review of {Gaussian} Random Fields
  and Correlation Functions}.
\newblock Norsk Regnesentral/Norwegian Computing Center, 1997.

\bibitem{sacks_design_1989}
J.~Sacks, W.~J. Welch, T.~J. Mitchell, and H.~P. Wynn, ``Design and analysis of
  computer experiments,'' {\em Statistical Science}, vol.~4, no.~4,
  pp.~409--423, 1989.

\bibitem{kitanidis_introduction_1997}
P.~K. Kitanidis, {\em Introduction to geostatistics: applications to
  hydrogeology}.
\newblock Cambridge University Press, 1997.

\bibitem{morris_bayesian_1993}
M.~D. Morris, T.~J. Mitchell, and D.~Ylvisaker, ``Bayesian design and analysis
  of computer experiments: Use of derivatives in surface prediction,'' {\em
  Technometrics}, vol.~35, no.~3, pp.~243--255, 1993.

\bibitem{chung_using_2002}
H.-S. Chung and J.~Alonso, ``Using gradients to construct cokriging
  approximation models for high-dimensional design optimization problems,'' in
  {\em 40th {AIAA} Aerospace Sciences Meeting \& Exhibit}, American Institute
  of Aeronautics and Astronautics, 2002.

\bibitem{kennedy_predicting_2000}
M.~Kennedy, ``Predicting the output from a complex computer code when fast
  approximations are available,'' {\em Biometrika}, vol.~87, no.~1, pp.~1--13,
  2000.

\bibitem{forrester_multi-fidelity_2007}
A.~I. Forrester, A.~Sóbester, and A.~J. Keane, ``Multi-fidelity optimization
  via surrogate modelling,'' {\em Proceedings of the Royal Society A:
  Mathematical, Physical and Engineering Sciences}, vol.~463, no.~2088,
  pp.~3251--3269, 2007.

\bibitem{le_gratiet_recursive_2014}
L.~Le~Gratiet and J.~Garnier, ``Recursive co-kriging model for design of
  computer experiments with multiple levels of fidelity,'' {\em International
  Journal for Uncertainty Quantification}, vol.~4, no.~5, pp.~365--386, 2014.

\bibitem{perdikaris_nonlinear_2017}
P.~Perdikaris, M.~Raissi, A.~Damianou, N.~D. Lawrence, and G.~E. Karniadakis,
  ``Nonlinear information fusion algorithms for data-efficient multi-fidelity
  modelling,'' {\em Proceedings of the Royal Society A: Mathematical, Physical
  and Engineering Sciences}, vol.~473, no.~2198, p.~20160751, 2017.

\bibitem{yang_physics-informed_2018}
X.~Yang, G.~Tartakovsky, and A.~Tartakovsky, ``Physics-informed kriging: A
  physics-informed {Gaussian} process regression method for data-model
  convergence,'' {\em {arXiv}:1809.03461 [cs, stat]}, 2018.

\bibitem{yang_physics-informed_2019}
X.~Yang, D.~Barajas-Solano, G.~Tartakovsky, and A.~M. Tartakovsky,
  ``Physics-informed {CoKriging}: A {Gaussian}-process-regression-based
  multifidelity method for data-model convergence,'' {\em Journal of
  Computational Physics}, vol.~395, pp.~410--431, 2019.

\bibitem{yang_when_2020}
X.~Yang, X.~Zhu, and J.~Li, ``When bifidelity meets {CoKriging}: An efficient
  physics-informed {MultiFidelity} method,'' {\em {SIAM} Journal on Scientific
  Computing}, vol.~42, no.~1, pp.~A220--A249, 2020.

\bibitem{schober_probabilistic_2014}
M.~Schober, D.~Duvenaud, and P.~Hennig, ``Probabilistic {ODE} solvers with
  runge-kutta means,'' {\em {arXiv}:1406.2582 [cs, math, stat]}, 2014.

\bibitem{hennig_probabilistic_2015}
P.~Hennig, M.~A. Osborne, and M.~Girolami, ``Probabilistic numerics and
  uncertainty in computations,'' {\em Proceedings of the Royal Society A:
  Mathematical, Physical and Engineering Sciences}, vol.~471, no.~2179,
  p.~20150142, 2015.

\bibitem{chkrebtii_bayesian_2016}
O.~A. Chkrebtii, D.~A. Campbell, B.~Calderhead, and M.~A. Girolami, ``Bayesian
  solution uncertainty quantification for differential equations,'' {\em
  Bayesian Analysis}, vol.~11, no.~4, pp.~1239--1267, 2016.

\bibitem{raissi_machine_2017}
M.~Raissi, P.~Perdikaris, and G.~E. Karniadakis, ``Machine learning of linear
  differential equations using {Gaussian} processes,'' {\em Journal of
  Computational Physics}, vol.~348, pp.~683--693, 2017.

\bibitem{raissi_numerical_2018}
M.~Raissi, P.~Perdikaris, and G.~E. Karniadakis, ``Numerical {Gaussian}
  processes for time-dependent and nonlinear partial differential equations,''
  {\em {SIAM} Journal on Scientific Computing}, vol.~40, no.~1, pp.~A172--A198,
  2018.

\bibitem{cockayne_bayesian_2019}
J.~Cockayne, C.~J. Oates, T.~J. Sullivan, and M.~Girolami, ``Bayesian
  probabilistic numerical methods,'' {\em {SIAM} Review}, vol.~61, no.~3,
  pp.~756--789, 2019.

\bibitem{da_veiga_gaussian_2012}
S.~Da~Veiga and A.~Marrel, ``Gaussian process modeling with inequality
  constraints,'' {\em Annales de la faculté des sciences de Toulouse
  Mathématiques}, vol.~21, no.~3, pp.~529--555, 2012.

\bibitem{lin_bayesian_2014}
L.~Lin and D.~B. Dunson, ``Bayesian monotone regression using {Gaussian}
  process projection,'' {\em Biometrika}, vol.~101, no.~2, pp.~303--317, 2014.

\bibitem{lopez-lopera_finite-dimensional_2018}
A.~F. López-Lopera, F.~Bachoc, N.~Durrande, and O.~Roustant,
  ``Finite-dimensional {Gaussian} approximation with linear inequality
  constraints,'' {\em {SIAM}/{ASA} Journal on Uncertainty Quantification},
  vol.~6, no.~3, pp.~1224--1255, 2018.

\bibitem{pensoneault_nonnegativity-enforced_2020}
A.~Pensoneault, X.~Yang, and X.~Zhu, ``Nonnegativity-enforced {Gaussian}
  process regression,'' {\em Theoretical and Applied Mechanics Letters},
  vol.~10, no.~3, pp.~182--187, 2020.

\bibitem{adler_geometry_2010}
R.~J. Adler, {\em The Geometry of Random Fields}.
\newblock Society for Industrial and Applied Mathematics, 2010.

\bibitem{solak_derivative_2003}
E.~Solak, R.~Murray-smith, W.~E. Leithead, D.~J. Leith, and C.~E. Rasmussen,
  ``Derivative observations in {Gaussian} process models of dynamic systems,''
  in {\em Advances in Neural Information Processing Systems 15} (S.~Becker,
  S.~Thrun, and K.~Obermayer, eds.), pp.~1057--1064, {MIT} Press, 2003.

\bibitem{dudley_real_2002}
R.~M. Dudley, {\em Real analysis and probability}.
\newblock No.~74 in Cambridge studies in advanced mathematics, Cambridge
  University Press, 2002.

\bibitem{koralov_theory_2007}
L.~Koralov and Y.~G. Sinai, {\em Theory of Probability and Random Processes}.
\newblock Universitext, Springer Berlin Heidelberg, 2007.

\bibitem{cramer_stationary_2004}
H.~Cramér and M.~R. Leadbetter, {\em Stationary and related stochastic
  processes: sample function properties and their applications}.
\newblock Dover books on mathematics, Dover Publications, dover ed~ed., 2004.

\bibitem{ulaganathan_performance_2015}
S.~Ulaganathan, I.~Couckuyt, F.~Ferranti, E.~Laermans, and T.~Dhaene,
  ``Performance study of multi-fidelity gradient enhanced kriging,'' {\em
  Structural and Multidisciplinary Optimization}, vol.~51, no.~5,
  pp.~1017--1033, 2015.

\bibitem{laurent_overview_2019}
L.~Laurent, R.~Le~Riche, B.~Soulier, and P.-A. Boucard, ``An overview of
  gradient-enhanced metamodels with applications,'' {\em Archives of
  Computational Methods in Engineering}, vol.~26, no.~1, pp.~61--106, 2019.

\bibitem{deng_multifidelity_2020}
Y.~Deng, G.~Lin, and X.~Yang, ``Multifidelity data fusion via gradient-enhanced
  {Gaussian} process regression,'' {\em Communications in Computational
  Physics}, vol.~28, no.~5, pp.~1812--1837, 2020.

\bibitem{liu_gaussian_2020}
H.~Liu, Y.-S. Ong, X.~Shen, and J.~Cai, ``When {G}aussian process meets big
  data: A review of scalable {GP}s,'' {\em IEEE transactions on neural networks
  and learning systems}, vol.~31, no.~11, pp.~4405--4423, 2020.

\end{thebibliography}

\end{document}